\documentclass[11pt]{amsart}

\usepackage[T1]{fontenc}
\usepackage{lmodern}
\usepackage{amsmath,amssymb,amsthm,mathtools,mathrsfs}
\usepackage{microtype}
\usepackage{booktabs}
\usepackage{array}
\usepackage{tabularx}
\usepackage{longtable}
\usepackage{enumitem}
\usepackage{graphicx}
\usepackage{xcolor}
\usepackage{geometry}
\usepackage{hyperref}
\usepackage[nameinlink,capitalize,noabbrev]{cleveref}

\hypersetup{
  colorlinks=true,
  linkcolor=blue!55!black,
  citecolor=green!35!black,
  urlcolor=blue!55!black,
  pdftitle={Bhargava Gamma Functions for Determinant-Admissible Sets},
  pdfauthor={Brian Diaz}
}

\newtheorem{theorem}{Theorem}[section]
\newtheorem{proposition}[theorem]{Proposition}

\newtheorem{corollary}[theorem]{Corollary}
\newtheorem{conjecture}[theorem]{Conjecture}
\newtheorem{problem}[theorem]{Problem}
\theoremstyle{definition}
\newtheorem{definition}[theorem]{Definition}

\theoremstyle{remark}
\newtheorem{remark}[theorem]{Remark}

\newcommand{\Pic}{\operatorname{Pic}}
\newcommand{\Nm}{\operatorname{Nm}}

\newcommand{\Gal}{\operatorname{Gal}}

\newcommand{\cO}{\mathcal O}
\newcommand{\cC}{\mathcal C}
\newcommand{\cV}{\mathcal V}
\newcommand{\cL}{\mathcal L}

\newcommand{\bbZ}{\mathbb Z}
\newcommand{\bbQ}{\mathbb Q}
\newcommand{\bbC}{\mathbb C}
\newcommand{\bbF}{\mathbb F}
\newcommand{\wideGamma}{\widehat{\Gamma}}

\title[Bhargava Gamma Functions]{Bhargava Gamma Functions for Determinant-Admissible Sets}
\author{Brian Diaz}
\date{Working draft v0.5, July 2026}

\begin{document}

\begin{abstract}
Bhargava's factorial assigns to a subset of a Dedekind domain a sequence of factorial ideals assembled from local ordering data.  We propose a determinant-completion theory for those factorial calculi whose local ordering dynamics admit canonical carry, orbit, or renewal models.  The resulting completed object is generally a determinant line rather than a scalar function; after clearing rational spectral multiplicities, its integer fibers recover a finite tensor power of the Bhargava factorial ideals.  In number fields the line is naturally metrized at the Archimedean places, while in function fields the distinguished places at infinity supply the completion.  Stirling asymptotics become large-parameter expansions of these metrized determinants, and reflection formulas become duality isomorphisms of determinant lines.

We formulate local and global admissibility hypotheses, recover classical Gamma, shifted Gamma products, Jackson-type determinants, P\'olya factorial ideals, the Bhargava--Barnes energy lift, and Takeda's quadratic polynomial-image examples, and isolate the multivariable theory as a separate higher-dimensional problem.  The set of cubes is developed as the first genuinely renewal-theoretic example.  At primes $p\equiv1\pmod 3$ we obtain an exact three-state carry determinant.  Character projection through $\chi_{-3}$ yields the Archimedean half-determinant
\[
  \Gamma_{C,\infty}(z)=\sqrt{\Gamma(z)\Gamma(3z-2)},
\]
and a square-root determinant line.  The remaining finite-place correction is governed by nonlinear renewal systems at $p\equiv2\pmod3$ and a ramified system at $p=3$.  The construction of their canonical global relative determinant is the principal analytic problem left open in this draft.
\end{abstract}

\maketitle

\tableofcontents

\section{Introduction}

Bhargava's generalized factorials attach to an arbitrary subset $S$ of a Dedekind ring a sequence of local ordering invariants and hence a sequence of factorial ideals; over $\bbZ$ these are positive integers \cite{Bhargava1997,Bhargava2000}.  The construction unifies fixed divisors, integer-valued polynomial bases, factorial divisibility, and several classical special examples.  Bhargava also asked for analogues of the Gamma function and Stirling's formula beyond the ordinary factorial \cite{Bhargava2000}.

For the full ring $S=R=\cO_K$ of a number field, Lamoureux proved the corresponding two-term Stirling formula for the norm of the P\'olya--Ostrowski factorial ideal and developed a sharper explicit formula in which the nontrivial zeros of $\zeta_K$ contribute to the remainder \cite{Lamoureux2014}.  The present determinant framework does not claim this norm asymptotic as new; rather, it identifies the local cyclotomic renewal system behind it and places Lamoureux's formula inside a line-valued Gamma completion.

The present paper proposes that the appropriate Gamma object is not, in the first instance, an arbitrary analytic interpolation of the values $n!_S$.  Rather, it should be the completion of the local ordering dynamics by relative determinants at finite places and spectral determinants at the places at infinity.  The guiding chain is
\[
\begin{aligned}
  \text{local orderings}
  &\longrightarrow \text{carry/orbit/renewal complexes}\\
  &\longrightarrow \text{relative determinants}\\
  &\longrightarrow \text{infinite-place completion}\\
  &\longrightarrow \text{Bhargava Gamma object}.
\end{aligned}
\]
This point of view is inspired in part by determinant descriptions of local $L$-factors and Archimedean Gamma factors \cite{Deninger1992}, but the transfer systems considered here arise from $p$-orderings rather than cohomological Frobenius operators.

The determinant language is optional over $\bbZ$, where ideals are principal, but becomes unavoidable over a general Dedekind domain.  A factorial ideal need not possess a preferred generator, and a rational Archimedean divisor may require passage to a root determinant line.  Thus the correct object is generally a line-valued section whose norm or chosen trivialization produces a scalar Gamma function.

A companion paper treats the prime Bhargava factorial using factorial calculi, centered cyclotomic completion, and orbitwise Stirling normalization \cite{Diaz2026Prime}.  Here the aim is broader: to identify a class of \emph{determinant-admissible} sets and global Dedekind pairs for which the local ordering data themselves select the Gamma completion.

\subsection*{Main themes}

The framework has four structural consequences.
\begin{enumerate}[label=(\roman*),leftmargin=2.2em]
  \item A Bhargava Gamma object may be an ordinary meromorphic function, a zero-free determinant, a Barnes- or Jackson-type spectral determinant, or a section of a finite root line.  Iterating its shift transport produces Bhargava--Barnes lines governing optimal Vandermonde energies.
  \item Stirling's formula is the large-parameter asymptotic expansion of the completed determinant metric.  The finite-place remainder is a centered renewal determinant rather than an unexplained error term.
  \item Euler reflection is not automatic.  When present, it is induced by a duality of the local and infinite-place determinant complexes.
  \item Arbitrary subsets need not be admissible.  Unstable local slopes, divergent prime products, non-discrete primitive spectra, or unbounded rational denominators can obstruct the construction.
\end{enumerate}

\subsection*{Status of the draft}

The general determinant formalism and several finite-state examples are developed rigorously at the algebraic level.  The cubic-residue carry determinant and the character extraction of the cube Archimedean factor are exact.  The inert-prime renewal system has been computed extensively, but the canonical analytic continuation of its centered determinant and the corresponding global uniqueness theorem remain open.  Numerical statements are explicitly labeled as such.

\section{Bhargava factorials over global Dedekind domains}

\subsection{Global Dedekind data}

Let $K$ be a global field and let $\Sigma_\infty$ be the set of Archimedean places when $K$ is a number field, or a fixed nonempty set of places designated as infinity when $K$ is a function field.  Let
\[
  R=\{x\in K: v_\mathfrak p(x)\ge0\text{ for every finite place }\mathfrak p\notin\Sigma_\infty\}
\]
be the corresponding ring of integers or $S$-integers.  Then $R$ is a Dedekind domain.  Fix an infinite subset $S\subseteq R$.

For every nonzero prime ideal $\mathfrak p\subset R$, a $\mathfrak p$-ordering of $S$ is a sequence $(a_n)_{n\ge0}$ chosen greedily so that
\[
  v_\mathfrak p\!\left(\prod_{j=0}^{n-1}(a_n-a_j)\right)
\]
is minimal among elements of $S$ at the $n$th stage.  The resulting minimum is independent of the selected ordering.  We denote it by
\[
  \nu_{S,\mathfrak p}(n).
\]
The $n$th Bhargava factorial ideal is
\begin{equation}\label{eq:factorial-ideal}
  [n]!_{S,R}=\prod_{\mathfrak p}\mathfrak p^{\nu_{S,\mathfrak p}(n)}.
\end{equation}
Only finitely many prime ideals occur for each $n$.

\begin{definition}[Local recurrence cocycle]
The local recurrence cocycle is
\begin{equation}\label{eq:local-cocycle}
  a_{S,\mathfrak p}(n)=\nu_{S,\mathfrak p}(n)-\nu_{S,\mathfrak p}(n-1),
  \qquad n\ge1.
\end{equation}
Its global recurrence ideal is
\[
  \mathfrak r_{S,R}(n)
  =[n]!_{S,R}[n-1]!_{S,R}^{-1}
  =\prod_{\mathfrak p}\mathfrak p^{a_{S,\mathfrak p}(n)}.
\]
\end{definition}

Over $R=\bbZ$ we write $n!_S$ and
\[
  \frac{n!_S}{(n-1)!_S}=\prod_p p^{a_{S,p}(n)}.
\]

\subsection{Line-valued interpolation}

The ideal $[n]!_{S,R}$ is an invertible $R$-module.  Unless it is canonically principal, there is no distinguished element to interpolate.  This motivates the following target.

\begin{definition}[Bhargava Gamma line]
A Bhargava Gamma line for $(R,S)$ is a line-valued meromorphic section $\mathscr G_{S,R}(z)$, defined on an appropriate analytic or rigid-analytic parameter space, together with normalized isomorphisms
\[
  \mathscr G_{S,R}(1)\simeq R,
  \qquad
  \mathscr G_{S,R}(n+1)\simeq[n]!_{S,R}
\]
for all $n\ge0$.
\end{definition}

A scalar Gamma function is obtained only after applying a norm, choosing a principalization, or trivializing the determinant line.  For $R=\cO_K$, the norm shadow is
\[
  \Gamma^{\Nm}_{S,K}(n+1)=\Nm_{K/\bbQ}([n]!_{S,\cO_K}).
\]

\section{Local determinant data}

The word ``determinant'' is meaningful only if the local recurrence is generated by a canonical dynamical or homological object, not merely encoded after the fact by an arbitrary rank-one operator.  We therefore distinguish a local model from its universal envelope.

\begin{definition}[Local ordering model]
A local ordering model for $(S,\mathfrak p)$ consists of
\begin{enumerate}[label=(L\arabic*),leftmargin=2.7em]
  \item a state space $X_{S,\mathfrak p}$ retaining the branch, carry, boundary, and origin data required by the $\mathfrak p$-ordering;
  \item a transfer or renewal operator $\cL_{S,\mathfrak p}(u)$ with arithmetic weight parameter $u$;
  \item boundary maps selecting the factorial recurrence from the resolvent of $\cL_{S,\mathfrak p}(u)$;
  \item a reference model $\cL^{\infty}_{S,\mathfrak p}(u)$ representing the primitive nonoscillatory component.
\end{enumerate}
\end{definition}

The exact local system may be finite-dimensional, cyclotomic, trace-class, or infinite-state.  The reference model is allowed to be virtual.

\begin{definition}[Integral relative cocycle]
Suppose the primitive local decomposition has rational coefficients.  Let $q_S$ be the least positive integer clearing all denominators.  The integral relative cocycle is
\begin{equation}\label{eq:relative-cocycle}
  d_{S,\mathfrak p}(n)
  =q_S\bigl(a_{S,\mathfrak p}(n)-a^{\infty}_{S,\mathfrak p}(n)\bigr).
\end{equation}
The corresponding virtual local complex is denoted
\[
  \cV_{S,\mathfrak p}
  =q_S[\cC_{S,\mathfrak p}]-[\cC^{\infty}_{S,\mathfrak p}].
\]
\end{definition}

\begin{definition}[Local determinant admissibility]
The pair $(S,\mathfrak p)$ is locally determinant-admissible if the virtual system $\cV_{S,\mathfrak p}$ admits a normalized relative determinant section $\Delta_{S,\mathfrak p}(z)$ such that
\begin{equation}\label{eq:local-interpolation}
  \Delta_{S,\mathfrak p}(n+1)
  \simeq
  \mathfrak p^{\sum_{m=1}^n d_{S,\mathfrak p}(m)}
\end{equation}
for all $n\ge0$, and if the determinant is selected uniquely by the local transfer data and a prescribed growth class.
\end{definition}

\begin{remark}
A unilateral-shift rank-one construction can encode any sufficiently controlled sequence.  Such an envelope proves existence of a formal relative determinant, but it does not establish determinant-admissibility in the intended sense.  Admissibility requires the determinant to arise from the intrinsic ordering dynamics.
\end{remark}

\section{Global determinant admissibility}

\begin{definition}[Global determinant datum]
A global determinant datum for $(R,S)$ consists of:
\begin{enumerate}[label=(G\arabic*),leftmargin=2.7em]
  \item locally determinant-admissible systems at every finite prime ideal;
  \item local mean values $\overline d_{S,\mathfrak p}$ whose renormalized degree-weighted sum defines a global constant $\beta_{S,R}$;
  \item centered local determinants $\Delta^0_{S,\mathfrak p}(z)$ whose renormalized product converges normally;
  \item a primitive infinite-place determinant line $\mathscr A_{S,\infty}(z)$;
  \item a uniqueness condition excluding arbitrary periodic or integer-vanishing scalar factors.
\end{enumerate}
\end{definition}

The centered finite-place determinant is
\begin{equation}\label{eq:global-finite-det}
  \Delta^0_{S,\mathrm{fin}}(z)
  =\prod_{\mathfrak p}^{\mathrm{ren}}\Delta^0_{S,\mathfrak p}(z).
\end{equation}
The product is understood in the determinant-line category; after applying the norm it becomes a scalar regularized Euler product.

\begin{definition}[Determinant-admissible pair]
The pair $(R,S)$ is determinant-admissible if it possesses a global determinant datum.
\end{definition}

\begin{theorem}[Formal determinant completion]\label{thm:formal-completion}
Let $(R,S)$ be determinant-admissible, and let $q_S$ clear the rational primitive multiplicities.  Then there is a completed determinant line
\begin{equation}\label{eq:completed-line}
  \widehat{\mathscr G}_{S,R}(z)
  =e^{q_S\beta_{S,R}(z-1)}
  \mathscr A_{S,\infty}^{\otimes q_S}(z)
  \otimes\Delta^0_{S,\mathrm{fin}}(z)
\end{equation}
whose integer fibers satisfy
\begin{equation}\label{eq:completed-interp}
  \widehat{\mathscr G}_{S,R}(n+1)
  \simeq([n]!_{S,R})^{\otimes q_S}.
\end{equation}
If the completed line admits a normalized $q_S$-th root, the Bhargava Gamma line is the explicit root determinant
\begin{equation}\label{eq:bhargava-gamma-root}
  \boxed{
  \mathscr G_{S,R}(z)
  :=\widehat{\mathscr G}_{S,R}(z)^{\otimes 1/q_S}
  =\left(
    e^{q_S\beta_{S,R}(z-1)}
    \mathscr A_{S,\infty}^{\otimes q_S}(z)
    \otimes\Delta^0_{S,\mathrm{fin}}(z)
  \right)^{\otimes 1/q_S}.}
\end{equation}
Here $(-)^{\otimes 1/q_S}$ denotes the chosen normalized root line, so that
\[
  \mathscr G_{S,R}(z)^{\otimes q_S}
  \simeq\widehat{\mathscr G}_{S,R}(z),
  \qquad
  \mathscr G_{S,R}(n+1)\simeq[n]!_{S,R}.
\]
\end{theorem}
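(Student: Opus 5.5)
The plan is to evaluate the right-hand side of \eqref{eq:completed-line} at $z=n+1$ one factor at a time and to keep exponents prime by prime. First, I unwind the local interpolation \eqref{eq:local-interpolation}. By \eqref{eq:relative-cocycle},
\[
\sum_{m=1}^n d_{S,\mathfrak p}(m)=q_S\nu_{S,\mathfrak p}(n)-q_S\sum_{m=1}^n a^\infty_{S,\mathfrak p}(m),
\]
using $\nu_{S,\mathfrak p}(0)=0$. Hence $\Delta_{S,\mathfrak p}(n+1)\simeq \mathfrak p^{q_S\nu_{S,\mathfrak p}(n)}\otimes \mathfrak p^{-q_S\sum_{m\le n}a^\infty_{S,\mathfrak p}(m)}$.

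Next, I split this into a centered part and a mean part. I write $\Delta_{S,\mathfrak p}(z)=e^{\overline d_{S,\mathfrak p}(z-1)\log N\mathfrak p}\,\Delta^0_{S,\mathfrak p}(z)$, which is the defining relation for the centered determinant in (G2)--(G3). The linear pieces are then summed with degree weights. After the renormalization in (G2), they assemble into the scalar factor $e^{q_S\beta_{S,R}(z-1)}$. The normal convergence in (G3) makes the renormalized product \eqref{eq:global-finite-det} well defined in the determinant-line category. Its integer fibers are therefore the tensor product over $\mathfrak p$ of the centered local fibers. This is legitimate because for fixed $n$ only finitely many $\mathfrak p$ have $\nu_{S,\mathfrak p}(n)\neq0$. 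The remaining $\mathfrak p$ contribute fibers canonically trivialized by the normalization of $\Delta^0$.

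The third step handles the reference factors. The primitive components $a^\infty_{S,\mathfrak p}$ have rational coefficients with denominators cleared by $q_S$. The infinite-place datum (G4) is exactly the primitive determinant line that completes these reference Euler factors. That means $\mathscr A_{S,\infty}(n+1)^{\otimes q_S}$ tensored with the global product of the $\mathfrak p^{q_S\sum a^\infty}$ and the renormalization constants is canonically trivial. Tensoring the three pieces then cancels the reference contributions and leaves $\bigotimes_{\mathfrak p}\mathfrak p^{q_S\nu_{S,\mathfrak p}(n)}=([n]!_{S,R})^{\otimes q_S}$ by \eqref{eq:factorial-ideal}, which is \eqref{eq:completed-interp}. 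The case $n=0$ gives the normalization $\widehat{\mathscr G}_{S,R}(1)\simeq R$, since $e^{0}=1$ and all local sums are empty.

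For the root statement, I assume a normalized $q_S$-th root $\mathscr G_{S,R}$ is chosen. Its $q_S$-th power is $\widehat{\mathscr G}_{S,R}$ by definition, so $\mathscr G_{S,R}(n+1)^{\otimes q_S}\simeq([n]!_{S,R})^{\otimes q_S}$. Invertible $R$-modules have torsion-free tensor powers only up to torsion in $\Pic(R)$, so I fix the root compatibly: I take the root of each local piece $\mathfrak p^{q_S\nu}$ to be $\mathfrak p^{\nu}$ and require the normalization at $z=1$. This yields $\mathscr G_{S,R}(n+1)\simeq[n]!_{S,R}$, and (G5) ensures the choice is unique.

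The main obstacle is the third step. I need to show precisely that the infinite-place line (G4) and the renormalization constants absorb the reference cocycle $a^\infty$ at integer points. As stated, this is part of the content of a global determinant datum rather than something derivable independently. I would first try to read (G2)--(G4) as asserting exactly this compatibility, that is, as a formal consequence of how $\beta_{S,R}$ and $\mathscr A_{S,\infty}$ are defined. If that reading fails, I would add this compatibility explicitly as part of the hypothesis.
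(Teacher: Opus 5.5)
Your outline matches the paper's proof. You tensor the local interpolation identities, insert the centered means to produce $e^{q_S\beta_{S,R}(z-1)}$, invoke the primitive reference identity at the infinite places, and read the root as a divisibility statement in $\Pic$. You are also right that the reference identity is an input, not a consequence of (G1)--(G5); the paper uses it the same way. However, one claim in your second step is false, and your third step is distorted to compensate for it.

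You assert that the integer fibers of the renormalized product $\Delta^0_{S,\mathrm{fin}}$ are the tensor product of the centered local fibers, almost all trivial because only finitely many $\mathfrak p$ have $\nu_{S,\mathfrak p}(n)\neq0$. That finiteness holds for the uncentered fibers $\mathfrak p^{\sum_{m\le n}d_{S,\mathfrak p}(m)}$, not for the centered ones. Wherever that exponent vanishes, your own defining relation gives $\Delta^0_{S,\mathfrak p}(n+1)=e^{-\overline d_{S,\mathfrak p}\,n\log\Nm\mathfrak p}$, which is not the trivial section. For the cubes, every $p\equiv1\pmod 3$ has $\overline d_p=2/(p-1)$, and the uncentered exponent is $0$ once $p>3n$. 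So those centered fibers are $p^{-2n/(p-1)}$, and their product over $p$ diverges. That divergence is the whole reason (G2)--(G3) are phrased with renormalization. Indeed, multiplying $\Delta_{S,\mathfrak p}=e^{\overline d_{S,\mathfrak p}(z-1)\log\Nm\mathfrak p}\,\Delta^0_{S,\mathfrak p}$ over all $\mathfrak p$ at $z=n+1$, your claim would equate a finite product with a divergent product times a finite one.

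``Insert the centered exponential means'' has to mean
\[
  \Delta^0_{S,\mathrm{fin}}(n+1)\simeq e^{-q_S\beta_{S,R}n}\otimes\bigotimes_{\mathfrak p}\mathfrak p^{\sum_{m\le n}d_{S,\mathfrak p}(m)}.
\]
This is a finite tensor product times the single constant to which the renormalization collapses the divergent product of means. It matches the paper's normalizations, e.g.\ $\Delta^{0,\Nm}_K(n+1)=e^{-(\gamma-\gamma_K)n}\Nm([n]!_{\cO_K})/n!$, and it cancels the prefactor $e^{q_S\beta_{S,R}(z-1)}$ exactly.

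The infinite-place input is then just the finite identity $\mathscr A_{S,\infty}(n+1)^{\otimes q_S}\simeq\bigotimes_{\mathfrak p}\mathfrak p^{q_S\sum_{m\le n}a^\infty_{S,\mathfrak p}(m)}$. No renormalization constants need to be absorbed at infinity; for cubes this is literally $\Gamma(N+1)\Gamma(3N+1)=N!\,(3N)!$. Your third step should be stated that way.

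Finally, (G5) only excludes periodic or integer-vanishing scalar factors. It does nothing about the $q_S$-torsion ambiguity in $\Pic(R)$. So $\mathscr G_{S,R}(n+1)\simeq[n]!_{S,R}$ holds because the normalized root is chosen that way, as the statement stipulates, not because of (G5).
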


\begin{proof}
Tensor the local interpolation identities \eqref{eq:local-interpolation}, insert the centered exponential means, and use the primitive reference identity at the infinite places.  The normal convergence and uniqueness conditions identify the global section.  The root statement is the corresponding divisibility assertion in the Picard group of determinant lines.
\end{proof}

\begin{remark}[Root obstruction]
The obstruction to a scalar $q_S$-th root has two components.  Analytically, a rational divisor may force a finite cover of the parameter plane.  Arithmetically, the completed line class must be divisible by $q_S$ in the relevant Picard group.
\end{remark}

\section{Stirling and reflection from determinant structure}

\subsection{Stirling asymptotics}

The determinant completion isolates the source of a Stirling expansion.  Equip the completed line with its infinite-place metric and write
\[
  \mathcal H_{S,R}(x)
  =\widehat{\deg}\,\overline{\mathscr G}_{S,R}(x)
\]
for its Arakelov degree in the number-field case, or the analogous degree at infinity in the function-field case.

\begin{theorem}[Determinant--Stirling principle]\label{thm:stirling-principle}
Assume that the logarithmic metric of $\mathscr A_{S,\infty}(x)$ has a sectorial spectral expansion and that
\[
  \log\|\Delta^0_{S,\mathrm{fin}}(x)\|=o(x)
  \qquad(x\to+\infty).
\]
Then the norm factorial satisfies
\[
  \log\Nm([n]!_{S,R})
  =A_Sn\log n+B_Sn+o(n).
\]
If the centered finite determinant has a full spectral expansion, then the full Stirling expansion is obtained by adding it termwise to the infinite-place expansion.
\end{theorem}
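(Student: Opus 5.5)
The plan is to read the Stirling expansion off the completed line of \cref{thm:formal-completion}, evaluated at the integer points $x=n+1$. By \eqref{eq:completed-interp}, $\widehat{\mathscr G}_{S,R}(n+1)\simeq([n]!_{S,R})^{\otimes q_S}$. The first step is to compute the metrized (Arakelov) degree of both sides. For a number field, the degree of the ideal line $[n]!_{S,R}$, with the trivial metric induced from $R$, is $-\log\Nm([n]!_{S,R})$ up to sign convention. Hence
\[
  q_S\log\Nm([n]!_{S,R})=\pm\mathcal H_{S,R}(n+1)
\]
up to the normalizing constant fixed by $\mathscr G_{S,R}(1)\simeq R$. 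This reduces the claim to an asymptotic for $\mathcal H_{S,R}(x)$ along $x=n+1$. We then split $\mathcal H_{S,R}$ according to the three tensor factors in \eqref{eq:completed-line}.

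The first factor is the exponential $e^{q_S\beta_{S,R}(z-1)}$, which contributes exactly $q_S\beta_{S,R}\,n$. This is linear in $n$ and goes into $B_S$.

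The second factor is $\mathscr A_{S,\infty}^{\otimes q_S}$, contributing $q_S\log\|\mathscr A_{S,\infty}(n+1)\|$. Here I would invoke the sectorial spectral expansion hypothesis in the form
\[
  \log\|\mathscr A_{S,\infty}(x)\|=\alpha x\log x+\beta' x+O(\log x),
\]
as $x\to+\infty$ along the real axis inside the sector. This is the analogue of Stirling for $\log\Gamma$ or Barnes-type determinants, where the leading terms come from the zeta-regularized spectral sum. Substituting $x=n+1$ and using $(n+1)\log(n+1)=n\log n+n\,\log(1+1/n)+\log(n+1)=n\log n+1+o(n)$ gives $\alpha n\log n+(\alpha\cdot0+\beta')n+o(n)$, so the leading shape is preserved.

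The third factor is $\Delta^0_{S,\mathrm{fin}}$, contributing $\log\|\Delta^0_{S,\mathrm{fin}}(n+1)\|=o(n)$ by hypothesis. Dividing the sum of the three contributions by $q_S$ gives
\[
  A_S=\alpha,\qquad B_S=\beta_{S,R}+\beta'/q_S\cdot q_S/q_S,
\]
with the precise normalization fixed by the sign convention above.

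For the full-expansion clause, the same decomposition applies. The degree is additive under tensor product, and the exponential term is exactly linear, so any asymptotic expansion of $\log\|\Delta^0_{S,\mathrm{fin}}(x)\|$ is added termwise to that of the infinite-place factor. Restricting to integers is harmless because both expansions are valid on a real ray.

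The main obstacle is the first step: making precise that the isomorphism \eqref{eq:completed-interp} is an isometry of metrized lines, or differs from one only by a bounded or $o(n)$ factor. Otherwise the degree identity acquires an uncontrolled error. I would handle this by requiring the normalized isomorphisms of \cref{thm:formal-completion} to be isometric at the infinite places, which is implicit in the construction of the completed metric. Failing that, I would absorb the discrepancy using the uniqueness condition (G5): it excludes periodic scalar factors, so any discrepancy must be of controlled exponential type $e^{cz}$, and such a factor only shifts $B_S$. In the function-field case, the same argument goes through with the degree at the distinguished places of $\Sigma_\infty$.
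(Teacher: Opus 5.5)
Your argument follows essentially the same route as the paper's proof. You take logarithmic norms (Arakelov degrees) of the three tensor factors of \eqref{eq:completed-line} at $x=n+1$, divide by $q_S$, insert the assumed expansions, and transfer to $[n]!_{S,R}$ via \eqref{eq:completed-interp}; the paper likewise implicitly relies on that interpolation isomorphism being isometric at the infinite places.

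Two small corrections, neither affecting the outcome. Your bookkeeping should give $B_S=\beta_{S,R}+\beta'$ (up to your sign convention), because the factor $q_S$ from $\mathscr A_{S,\infty}^{\otimes q_S}$ cancels when you divide by $q_S$. Your fallback via (G5) is unnecessary and is not justified as stated: excluding periodic or integer-vanishing factors does not force an arbitrary metric discrepancy to have the form $e^{cz}$.
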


\begin{proof}
Take logarithmic norms in \eqref{eq:completed-line}, divide by $q_S$, and apply the assumed spectral expansions.  The interpolation identity \eqref{eq:completed-interp} transfers the result to the factorial ideals.
\end{proof}

Thus Stirling's formula is not an independent interpolation miracle.  It is the large-parameter expansion of the completed determinant metric.

\subsection{Reflection as duality}

A reflection law requires additional structure.

\begin{definition}[Reflection-admissibility]
A determinant-admissible pair is reflection-admissible if there is a dual ordering system $S^\vee$, a weight $w_S$, and compatible dualities
\[
  \cV_{S,\mathfrak p}(z)
  \simeq
  \mathbf D\cV_{S^\vee,\mathfrak p}(w_S-z)[\epsilon_\mathfrak p]
\]
at finite places, together with an analogous duality at infinity.
\end{definition}

Taking determinant lines then gives an isomorphism
\[
  \mathscr G_{S,R}(z)\otimes
  \mathscr G_{S^\vee,R}(w_S-z)
  \simeq\mathscr R_{S,R}(z),
\]
where $\mathscr R_{S,R}$ is an epsilon or reflection line.  The self-dual case recovers an Euler-type formula.

\section{Basic examples}

The examples below range from ordinary scalar functions to genuinely ideal-valued or root-line objects.  Table~\ref{tab:example-landscape} records exact analytic formulas when they are available and otherwise gives the exact defining integer fibers of the determinant line.  We use
\[
  w_q(n)=\sum_{j\ge1}\left\lfloor\frac{n}{q^j}\right\rfloor.
\]
For the quadratic row, write
\[
  f(x)=g(a_2x^2+a_1x)+a_0,\qquad
  \delta_f=\mathbf 1_{\{2\mid a_1,\,2\nmid a_2\}}.
\]
Let $\Delta_{f,p}(z)$ denote the normalized marked-cycle determinant with
\[
  \Delta_{f,p}(n+1)=p^{v_p(n!)-v_p((2n)!)},
\]
and let $B_f(z)$ be the normalized boundary determinant satisfying
$B_f(1)=1$ and $B_f(n+1)=2^{-\delta_f}$ for $n\ge1$.
For a number field $K$, write $\Delta^{0,\Nm}_K(z)$ for the centered scalar norm determinant, normalized by
\[
  \Delta^{0,\Nm}_K(n+1)
  =e^{-(\gamma-\gamma_K)n}
    \frac{\Nm([n]!_{\cO_K})}{n!}.
\]

\begingroup
\footnotesize
\renewcommand{\arraystretch}{1.28}
\begin{longtable}{@{}>{\raggedright\arraybackslash}p{0.12\textwidth}>{\raggedright\arraybackslash}p{0.16\textwidth}>{\raggedright\arraybackslash}p{0.43\textwidth}>{\raggedright\arraybackslash}p{0.21\textwidth}@{}}
\caption{One-dimensional examples of Bhargava Gamma objects.  Every row displays a $z$-dependent Gamma function or determinant line; its exact integer fibers are included when they carry additional arithmetic information.  ``Line-valued'' means that no preferred scalar generator exists in general.}\label{tab:example-landscape}\\
\toprule
Base domain $R$ & Set or derived object & Gamma function or determinant line in $z$ & Nature and status \\
\midrule
\endfirsthead
\multicolumn{4}{c}{\tablename\ \thetable\ -- continued}\\
\toprule
Base domain $R$ & Set or derived object & Gamma function or determinant line in $z$ & Nature and status \\
\midrule
\endhead
\midrule
\multicolumn{4}{r}{Continued on the next page}\\
\endfoot
\bottomrule
\endlastfoot
$\bbZ$ & $S=R$ &
$\Gamma_S(z)=\Gamma(z)$, hence $\Gamma_S(n+1)=n!$. &
Classical scalar function; finite-place digit towers and one Archimedean spectral line. \\
$\bbZ$ & Quadratic image $S=f(\bbZ)$ &
\(
\displaystyle
\Gamma_f(z)
=g^{z-1}\Gamma(2z-1)B_f(z)
 \prod_{p\mid a_2}\Delta_{f,p}(z),
\)
\par\smallskip
with the exact fibers
\(
\displaystyle
\Gamma_f(n+1)
=g^n(2n)!\,2^{-\delta_f}
 \prod_{p\mid a_2}p^{v_p(n!)-v_p((2n)!)}.
\)
& Takeda's exact factorial, expressed as a shifted classical Gamma factor, finitely many marked cyclotomic determinants, and one boundary determinant. \\
$\bbZ$ & $S_{a,q}=\{aq^m:m\ge0\}$ &
\(
\displaystyle
\Gamma_{a,q}^{\rm geom}(z)
=a^{z-1}q^{(z-1)^2}
\frac{\det(I-q^{-1}Q)}{\det(I-q^{-z}Q)}.
\)
& Trace-class Fredholm determinant; scalar. \\
$\bbC[q,q^{-1}]$ & $\{(q^m-1)/(q-1):m\ge0\}$ &
After $0<q<1$,
\(
\displaystyle
\Gamma_q(z)=(1-q)^{1-z}
\frac{(q;q)_\infty}{(q^z;q)_\infty}.
\)
& Jackson $q$-Gamma; powers of $q$ are units in the Laurent ring. \\
$\cO_K$ & $S=R$ &
\(
\displaystyle
\mathscr G_K(z)
=e^{\beta_K(z-1)}\mathscr A_{K,\infty}(z)
 \otimes\Delta^0_{K,\mathrm{fin}}(z),
\)
\par\smallskip
\(
\displaystyle
\Gamma_K^{\Nm}(z)
=e^{(\gamma-\gamma_K)(z-1)}\Gamma(z)
 \Delta_K^{0,\Nm}(z),
\)
\par\smallskip
and
\(
\displaystyle
\mathscr G_K(n+1)\cong
\prod_{\mathfrak p}\mathfrak p^{w_{\Nm\mathfrak p}(n)}.
\)
& Proposed line-valued completion; the integer fibers are exact and the second formula is its scalar norm shadow.  Lamoureux determines its first two Stirling coefficients, and scalarization is controlled by the P\'olya group. \\
$\cO_K$ & $n$-optimal $T\subset\cO_K$, $|T|=n+1$ &
\(
\displaystyle
\mathscr B_K(z+1)
\cong\mathscr G_K(z)\otimes\mathscr B_K(z),
\qquad \mathscr B_K(1)\cong\cO_K,
\)
\par\smallskip
\(
\displaystyle
\mathscr B_K(n+2)
\cong\bigotimes_{m=1}^{n}\mathscr G_K(m+1),
\qquad
\mathcal E(T)\cong\mathscr B_K(n+2)^{\otimes2}.
\)
& Bhargava--Barnes lift; the optimal Vandermonde energy is the square of a second-level Gamma determinant. \\
$\bbZ[i]$ & $S=R$ &
\(
\displaystyle
\Gamma_i(z)
=e^{\beta_i(z-1)}\mathscr A_{i,\infty}(z)\Delta_i^0(z),
\qquad
\Gamma_i(n+1)=G_i(n),
\)
\par\smallskip
where
\(
\begin{aligned}
G_i(n)&=(1+i)^{w_2(n)}
 \prod_{p\equiv1(4)}p^{w_p(n)}\\[-1mm]
&\quad\times\prod_{p\equiv3(4)}p^{w_{p^2}(n)}.
\end{aligned}
\)
& Proposed scalar completion after a unit normalization; its exact integer fibers record the splitting of rational primes in $\bbQ(i)$. \\
$\bbZ[\sqrt{-5}]$ & $S=R$ &
\(
\displaystyle
\mathscr G_K(z)
=e^{\beta_K(z-1)}\mathscr A_{K,\infty}(z)
 \otimes\Delta^0_{K,\mathrm{fin}}(z),
\)
\par\smallskip
\(
\displaystyle
\mathscr G_K(n+1)\cong
\prod_{\mathfrak p}\mathfrak p^{w_{\Nm\mathfrak p}(n)},
\qquad
\mathscr G_K(3)\cong(2,1+\sqrt{-5}).
\)
& Proposed line-valued completion with exact integer fibers; the displayed fiber is nonprincipal, so no canonical scalar Gamma exists globally. \\
$\bbF_q[t]$ & $S=R$ &
For $z=\sum_{i\ge0}z_iq^i$ on the chosen digit domain,
\(
\displaystyle
\Gamma_A(z+1)=\prod_{i\ge0}D_i^{z_i},
\qquad
D_i=\prod_{\substack{a\in A\ \mathrm{monic}\\ \deg a=i}}a.
\)
Thus $\Gamma_A(n+1)=\Pi_A(n)$ at nonnegative integers. & Carlitz--Goss--Thakur completion at the chosen place $\infty$; digit carries replace real Archimedean growth. \\
Global function-field integer ring $A$ & $S=R$ &
\(
\displaystyle
\mathscr G_A(z)
=e^{\beta_A(z-1)}\mathscr A_{A,\infty}(z)
 \otimes\Delta^0_{A,\mathrm{fin}}(z),
\)
\par\smallskip
\(
\displaystyle
\mathscr G_A(n+1)\cong[n]!_{A,A}
=\prod_{\mathfrak p}\mathfrak p^{w_{\Nm\mathfrak p}(n)}.
\)
& Proposed $\infty$-adic determinant line with exact integer fibers; degree and $\infty$-adic norm provide Stirling data. \\
$\bbZ$ & Cubes $C=\{m^3\}$ &
\(
\displaystyle
\Gamma_C(z)
=\left[
 e^{2\beta_C(z-1)}\Gamma(z)\Gamma(3z-2)
 \Delta^0_{C,\mathrm{fin}}(z)
\right]^{1/2},
\)
\par\smallskip
or equivalently
\(
\displaystyle
\Gamma_C^{\otimes2}(z)
=e^{2\beta_C(z-1)}\Gamma(z)\Gamma(3z-2)
\Delta^0_{C,\mathrm{fin}}(z),
\)
with $\Gamma_C(n+1)=n!_C$ on the positive integer branch. & Square-root determinant line; the Archimedean factor is exact, while the intrinsic inert-prime determinant remains under construction. \\
\end{longtable}
\endgroup

\subsection{The ordinary factorial}

For $R=\bbZ$ and $S=\bbZ$, the local recurrence is
\[
  v_p(n)=\sum_{j\ge1}\mathbf 1_{p^j\mid n}.
\]
The local dynamics are a direct sum of cycles of lengths $p^j$.  The primitive infinite-place divisor is $[0]$, and the completed object is the ordinary Gamma function
\[
  \Gamma_S(z)=\Gamma(z).
\]

\subsection{Affine progressions and quadratic images}

For an arithmetic progression $S=a\bbZ+b$, Bhargava's formula gives
\[
  n!_S=a^n n!,
\]
so
\[
  \Gamma_S(z)=a^{z-1}\Gamma(z).
\]

Takeda proves an exact formula and a Stirling law for images of quadratic polynomial maps \cite{Takeda2023}.  Let
\[
  f(x)=g(a_2x^2+a_1x)+a_0,
  \qquad g\ge1,\qquad \gcd(a_1,a_2)=1,
\]
and put
\[
  \delta_f=\mathbf 1_{\{2\mid a_1,\,2\nmid a_2\}}.
\]
Then
\begin{equation}\label{eq:takeda-quadratic-exact}
  n!_{f(\bbZ)}
  =g^n(2n)!\,2^{-\delta_f}
  \prod_{p\mid a_2}p^{v_p(n!)-v_p((2n)!)}.
\end{equation}
Thus the primitive infinite-place factor is $g^{z-1}\Gamma(2z-1)$.  For each $p\mid a_2$, the exponent
\[
  v_p(n!)-v_p((2n)!)=-v_p\!\left(\frac{(2n)!}{n!}\right)
\]
is generated by the marked residue towers for the odd factors $2m-1$, while the factor $2^{-\delta_f}$ is a boundary-state correction.  Equivalently, if $\Delta_{p,f}$ and $B_f$ denote the corresponding normalized local determinant sections, their integer fibers are
\[
  \Delta_{p,f}(n+1)=p^{v_p(n!)-v_p((2n)!)},
  \qquad
  B_f(n+1)=2^{-\delta_f}\quad(n\ge1),
\]
and the exact determinant decomposition is
\[
  \mathscr G_f(z)
  \simeq g^{z-1}\Gamma(2z-1)
  \otimes B_f(z)
  \otimes\bigotimes_{p\mid a_2}\Delta_{p,f}(z).
\]
In particular, these examples have $q_S=1$.  Formula \eqref{eq:takeda-quadratic-exact} is exact at every integer; the spectral uniqueness axiom selects the scalar continuation between the integers.

\subsection{Geometric progressions and $q$-factorials}

Bhargava's geometric example already exhibits an infinite-rank determinant that can be computed completely \cite{Bhargava2000}.  Let
\[
  S_{a,q}=\{aq^m:m\ge0\}\subset\bbZ,
  \qquad a\ge1,\quad q\ge2.
\]
The natural ordering $a,aq,aq^2,\ldots$ is simultaneous at every rational prime.  Hence
\begin{align}
  n!_{S_{a,q}}
  &=a^n\prod_{j=0}^{n-1}(q^n-q^j)\notag\\
  &=a^nq^{n(n-1)/2}\prod_{r=1}^{n}(q^r-1)\notag\\
  &=a^nq^{n^2}(q^{-1};q^{-1})_n,
  \label{eq:geometric-factorial}
\end{align}
where $(u;u)_n=\prod_{r=1}^n(1-u^r)$.  In particular,
\[
  \frac{n!_{S_{a,q}}}{(n-1)!_{S_{a,q}}}
  =a q^{n-1}(q^n-1).
\]

\begin{proposition}[Fredholm completion of a geometric progression]
Let $r=q^{-1}$ and let $Q_r e_m=r^m e_m$ on $\ell^2(\bbZ_{\ge0})$.  Then $Q_r$ is trace class and
\begin{equation}\label{eq:geometric-fredholm}
  \Gamma_{a,q}^{\mathrm{geom}}(z)
  =a^{z-1}q^{(z-1)^2}
  \frac{\det(I-rQ_r)}{\det(I-r^zQ_r)}
\end{equation}
obeys
\[
  \Gamma_{a,q}^{\mathrm{geom}}(1)=1,
  \qquad
  \Gamma_{a,q}^{\mathrm{geom}}(n+1)=n!_{S_{a,q}}.
\]
Equivalently,
\[
  \Gamma_{a,q}^{\mathrm{geom}}(z)
  =a^{z-1}(q-1)^{z-1}q^{(z-1)(z-2)}
  \Gamma_{q^{-1}}(z),
\]
where $\Gamma_{q^{-1}}$ is Jackson's Gamma function.
\end{proposition}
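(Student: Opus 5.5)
The plan is to compute both Fredholm determinants explicitly as infinite $q$-Pochhammer products and then compare with \eqref{eq:geometric-factorial}. Throughout we set $r=q^{-1}\in(0,1/2]$ and write $(u;r)_\infty=\prod_{m\ge0}(1-ur^m)$.

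\textbf{Step 1 (trace class and product formula).} $Q_r$ is diagonal and positive with eigenvalues $r^m$, so its trace norm is $\sum_{m\ge0}r^m=(1-r)^{-1}<\infty$. Hence $Q_r$ is trace class. For every $u\in\bbC$ the operator $uQ_r$ is also trace class, and for diagonal trace-class operators the Fredholm determinant is the absolutely convergent product of $1-u\lambda_m$ over the eigenvalues $\lambda_m=r^m$. This gives
\[
  \det(I-uQ_r)=(u;r)_\infty .
\]
The right-hand side is an entire function of $u$. Substituting $u=r$ and $u=r^z=e^{-z\log q}$ gives
\[
  \frac{\det(I-rQ_r)}{\det(I-r^zQ_r)}=\frac{(r;r)_\infty}{(r^z;r)_\infty}.
\]
This is a meromorphic function of $z$. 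Its poles lie where $r^{z+m}=1$ for some $m\ge0$, that is, at $z=-m+2\pi i k/\log q$ with $m\ge0$ and $k\in\bbZ$. In particular it has no poles at the positive integers, so the values at $z=n+1$ are well defined.

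\textbf{Step 2 (integer fibers).} Set $z=n+1$. Then $(r^{n+1};r)_\infty=\prod_{m\ge n+1}(1-r^m)$, and the ratio telescopes:
\[
  \frac{(r;r)_\infty}{(r^{n+1};r)_\infty}=(r;r)_n=(q^{-1};q^{-1})_n .
\]
Multiplying by $a^{n}q^{n^2}$ reproduces exactly the last line of \eqref{eq:geometric-factorial}, so $\Gamma^{\mathrm{geom}}_{a,q}(n+1)=n!_{S_{a,q}}$. At $z=1$ the prefactor $a^{0}q^{0}$ equals $1$ and the determinant ratio is also $1$, so $\Gamma^{\mathrm{geom}}_{a,q}(1)=1$.

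\textbf{Step 3 (Jackson form).} Jackson's function is $\Gamma_r(z)=(1-r)^{1-z}(r;r)_\infty/(r^z;r)_\infty$. So the determinant ratio from Step 1 equals $(1-r)^{z-1}\Gamma_r(z)$. Now use
\[
  (1-q^{-1})^{z-1}=(q-1)^{z-1}q^{-(z-1)}
  \qquad\text{and}\qquad
  q^{(z-1)^2}q^{-(z-1)}=q^{(z-1)(z-2)} .
\]
Combining these yields
\[
  \Gamma^{\mathrm{geom}}_{a,q}(z)=a^{z-1}(q-1)^{z-1}q^{(z-1)(z-2)}\Gamma_{q^{-1}}(z).
\]

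\textbf{Main point requiring care.} The only genuinely analytic step is the identification of the Fredholm determinant with the infinite product in Step 1. This holds for diagonal trace-class operators, for example via $\log\det(I-uQ_r)=\operatorname{tr}\log(I-uQ_r)$ for small $|u|$ followed by analytic continuation in $u$. Everything after that is exact algebra with no limiting issues.
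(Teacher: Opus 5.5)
Your proposal is correct and follows essentially the same route as the paper: you identify $\det(I-r^zQ_r)$ with $(r^z;r)_\infty$, telescope the ratio at $z=n+1$ to $(r;r)_n$, compare with \eqref{eq:geometric-factorial}, and rewrite the result via Jackson's formula. You also supply details the paper leaves implicit: the trace-class bound, the pole locations, and the exponent bookkeeping $q^{(z-1)^2}q^{-(z-1)}=q^{(z-1)(z-2)}$.
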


\begin{proof}
Since
\[
  \det(I-r^zQ_r)=\prod_{m\ge0}(1-r^{z+m})=(r^z;r)_\infty,
\]
the determinant ratio at $z=n+1$ equals $(r;r)_n$.  The assertion follows from \eqref{eq:geometric-factorial}.  The Jackson form follows from
\[
  \Gamma_r(z)=(1-r)^{1-z}\frac{(r;r)_\infty}{(r^z;r)_\infty}.
\]
\end{proof}

For example, if $S=\{1,2,4,8,\ldots\}$, then
\[
  0!_S=1,\quad 1!_S=1,\quad 2!_S=6,\quad
  3!_S=168,\quad4!_S=20160,
\]
and the successive nontrivial multipliers are $6,28,120,\ldots$, namely
$2^{n-1}(2^n-1)$.

Bhargava also obtains the abstract $q$-factorial by taking
\[
  S_q^{\mathrm{abs}}
  =\left\{[m]_q=\frac{q^m-1}{q-1}:m\ge0\right\}
  \subset \bbC[q,q^{-1}].
\]
Here the factor $q^{n(n-1)/2}$ arising from the differences is a unit.  Thus the factorial ideal has the normalized generator
\[
  [n]_q!
  =\prod_{r=1}^n\frac{q^r-1}{q-1}.
\]
After an analytic specialization $0<q<1$, its scalar completion is the usual Jackson function $\Gamma_q(z)$.  The two geometric constructions therefore differ only in whether the powers of $q$ are retained as arithmetic data or discarded as units in the ambient Laurent ring.

\subsection{The full ring of a global Dedekind domain}

The case $S=R$ admits a uniform local computation in both number fields and function fields.  Assume that every residue field of $R$ is finite and put
\[
  q_{\mathfrak p}=\#(R/\mathfrak p)=\Nm\mathfrak p,
  \qquad
  w_q(n)=\sum_{j\ge1}\left\lfloor\frac{n}{q^j}\right\rfloor.
\]

\begin{proposition}[P\'olya--Ostrowski factorial ideal]\label{prop:polya-factorial}
For $S=R$ one has
\begin{equation}\label{eq:polya-factorial}
  [n]!_{R,R}
  =\prod_{\mathfrak p}\mathfrak p^{w_{q_{\mathfrak p}}(n)}.
\end{equation}
The recurrence cocycle is
\begin{equation}\label{eq:polya-recurrence}
  a_{R,\mathfrak p}(n)
  =\sum_{j\ge1}\mathbf 1_{q_{\mathfrak p}^j\mid n}.
\end{equation}
Consequently, every finite place is represented by a cyclotomic tower of orbit lengths
$q_{\mathfrak p},q_{\mathfrak p}^2,\ldots$.
\end{proposition}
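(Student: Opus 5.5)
The plan is to compute each local exponent $\nu_{R,\mathfrak p}(n)$ directly and then assemble the ideal via \eqref{eq:factorial-ideal}. Since $\nu_{S,\mathfrak p}$ is independent of the chosen $\mathfrak p$-ordering, it is enough to exhibit one explicit $\mathfrak p$-ordering of $R$ and show it is greedy. Passing to the completion does not change valuations of differences, and $R$ is dense in $R_{\mathfrak p}$. So the computation reduces to the local ring $R_{\mathfrak p}$, a complete DVR with uniformizer $\pi$ and residue field of size $q=q_{\mathfrak p}$.

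Fix a set of representatives $r_0=0,r_1,\dots,r_{q-1}\in R$ of $R/\mathfrak p$, and let $\pi\in R$ have $v_{\mathfrak p}(\pi)=1$. For $n=\sum_i n_i q^i$ written in base $q$, put $a_n=\sum_i r_{n_i}\pi^i$. This is the standard Bhargava/Pólya--Ostrowski "digit" sequence.

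The key observation is about where the first $n$ terms lie. Among $a_0,\dots,a_{n-1}$, the number congruent to $a_n$ modulo $\mathfrak p^j$ is exactly $\lfloor n/q^j\rfloor$. The residue of $a_n$ mod $\mathfrak p^j$ is determined by the lowest $j$ digits of $n$. Among $0,\dots,n-1$, the integers sharing those lowest $j$ digits with $n$ form an arithmetic progression with difference $q^j$ ending below $n$, so there are $\lfloor n/q^j\rfloor$ of them. Hence
\[
  v_{\mathfrak p}\Bigl(\prod_{k<n}(a_n-a_k)\Bigr)=\sum_{j\ge1}\#\{k<n: a_k\equiv a_n \bmod \mathfrak p^j\}=w_q(n).
\]

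Next I will prove the matching lower bound for an arbitrary $x\in R$. For each $j$, the elements $a_0,\dots,a_{n-1}$ are spread evenly over the $q^j$ residue classes mod $\mathfrak p^j$. Concretely, every class receives either $\lfloor n/q^j\rfloor$ or $\lceil n/q^j\rceil$ of them, because the indices $0,\dots,n-1$ are spread evenly over classes mod $q^j$. So any $x$ meets at least $\lfloor n/q^j\rfloor$ of them mod $\mathfrak p^j$, and summing over $j$ gives $v_{\mathfrak p}\bigl(\prod_{k<n}(x-a_k)\bigr)\ge w_q(n)$. This establishes that $(a_n)$ is a $\mathfrak p$-ordering and that $\nu_{R,\mathfrak p}(n)=w_{q_{\mathfrak p}}(n)$.

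The main point requiring care is this inductive greedy verification: at every stage the minimum must be attained by $a_n$ specifically. The even-distribution argument above handles it uniformly, so I expect no real obstruction. The argument also needs $v_{\mathfrak p}(a_n-a_k)$ to be at least $j$ exactly when the first $j$ digits agree. That holds because distinct $r_i$ differ by a unit at $\mathfrak p$.

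Substituting into \eqref{eq:factorial-ideal} gives \eqref{eq:polya-factorial}. The product is finite because $w_q(n)=0$ once $q>n$, and only finitely many $\mathfrak p$ have norm at most $n$. For the cocycle \eqref{eq:polya-recurrence}, note that $\lfloor n/q^j\rfloor-\lfloor (n-1)/q^j\rfloor=\mathbf 1_{q^j\mid n}$, and summing over $j$ gives the stated formula. The final sentence about orbit lengths is interpretive. Each summand $\mathbf 1_{q^j\mid n}$ is a periodic indicator of period $q_{\mathfrak p}^j$, so it is realized by the cyclic permutation on a $q_{\mathfrak p}^j$-cycle with one marked state. The direct sum of these cycles over $j\ge1$ is the cyclotomic tower, exactly as in the ordinary-factorial case treated earlier.
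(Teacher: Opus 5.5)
Your proof is correct and takes the paper's route: the paper builds the same digit-based $\mathfrak p$-ordering from compatible residue representatives read in base $q_{\mathfrak p}$, cites Legendre counting for $w_{q_{\mathfrak p}}(n)$, and takes first differences to get the cocycle. You also supply the greedy lower bound that the paper leaves implicit; note that your even-distribution step needs the map $k \bmod q^j\mapsto a_k \bmod \mathfrak p^j$ to be a bijection onto $R/\mathfrak p^j$, which follows from your digit criterion together with $\#(R/\mathfrak p^j)=q^j$.
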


\begin{proof}
A $\mathfrak p$-ordering of $R$ is obtained by choosing compatible representatives of the residue classes modulo $\mathfrak p^j$ and reading their indices in base $q_{\mathfrak p}$.  The usual Legendre counting argument gives $w_{q_{\mathfrak p}}(n)$.  Taking first differences gives \eqref{eq:polya-recurrence}.
\end{proof}

Grouping prime ideals by norm, set
\[
  \Pi_q(R)=\prod_{\Nm\mathfrak p=q}\mathfrak p.
\]
Then
\[
  [n]!_{R,R}=\prod_{q\ge2}\Pi_q(R)^{w_q(n)}.
\]
This formula makes the class-group content explicit: the factorial line is scalarizable for all $n$ precisely when the relevant products $\Pi_q(R)$ are principal.  For number fields their classes generate the P\'olya--Ostrowski group \cite{Leriche2011}.

\subsubsection{A principal example: the Gaussian integers}
Let $R=\bbZ[i]$.  The prime $2$ is ramified, primes $p\equiv1\pmod4$ split, and primes $p\equiv3\pmod4$ are inert.  If $\mathfrak p_2=(1+i)$, then
\begin{equation}\label{eq:gaussian-factorial}
  [n]!_{\bbZ[i]}
  =\mathfrak p_2^{w_2(n)}
  \prod_{p\equiv1(4)}(p)^{w_p(n)}
  \prod_{p\equiv3(4)}(p)^{w_{p^2}(n)}.
\end{equation}
Since $\bbZ[i]$ is principal, one may choose the generator
\[
  G_i(n)=(1+i)^{w_2(n)}
  \prod_{p\equiv1(4)}p^{w_p(n)}
  \prod_{p\equiv3(4)}p^{w_{p^2}(n)}.
\]
The first factorial lines are represented by
\[
  1,\ 1,\ 1+i,\ 1+i,\ (1+i)^3,\ 5(1+i)^3,\ 5(1+i)^4,\ldots.
\]
Thus even for a principal ring the scalar sequence is visibly different from $n!$; the splitting type of rational primes has entered the recurrence.

\subsubsection{A genuinely line-valued example}
Let $R=\bbZ[\sqrt{-5}]$.  The prime $2$ ramifies as
\[
  (2)=\mathfrak p_2^2,
  \qquad
  \mathfrak p_2=(2,1+\sqrt{-5}),
  \qquad
  \Nm\mathfrak p_2=2.
\]
Therefore
\[
  [2]!_R=\mathfrak p_2.
\]
The ideal $\mathfrak p_2$ is not principal: a generator would have algebraic norm $2$, whereas $a^2+5b^2=2$ has no integral solution.  Hence the Gamma value at $z=3$ cannot be represented canonically by an element of $K$; it is intrinsically the nontrivial ideal line $\mathfrak p_2$.  This elementary example already forces the line-valued definition.

\subsubsection{Norm recurrence and Dedekind zeta}
For a number field $K$, define
\[
  b_K(n)=\log\Nm_{K/\bbQ}\!\left(
  [n]!_{\cO_K}[n-1]!_{\cO_K}^{-1}\right).
\]
By \eqref{eq:polya-recurrence},
\[
  b_K(n)=\sum_{\mathfrak p}\log q_{\mathfrak p}
  \sum_{j\ge1}\mathbf 1_{q_{\mathfrak p}^j\mid n}.
\]
Consequently, for $\Re(s)>1$,
\begin{equation}\label{eq:number-field-dirichlet}
  \sum_{n\ge1}\frac{b_K(n)}{n^s}
  =\zeta(s)\left(-\frac{\zeta_K'}{\zeta_K}(s)\right).
\end{equation}
If $\gamma_K$ is the Euler--Kronecker constant of $K$, then the Laurent expansion at $s=1$ is
\begin{equation}\label{eq:number-field-laurent}
  \frac1{(s-1)^2}+\frac{\gamma-\gamma_K}{s-1}+O(1).
\end{equation}
Lamoureux proved the resulting number-field Stirling formula
\begin{equation}\label{eq:number-field-stirling}
  \log\Nm([n]!_{\cO_K})
  =n\log n+(\gamma-\gamma_K-1)n+o_K(n)
\end{equation}
for every number field $K$ \cite{Lamoureux2014}.  The transform \eqref{eq:number-field-dirichlet} gives a compact explanation of the two displayed coefficients: the double pole yields $n\log n-n$, while the coefficient of $(s-1)^{-1}$ supplies $(\gamma-\gamma_K)n$.  Lamoureux's thesis goes further than \eqref{eq:number-field-stirling}, deriving a refined explicit formula whose remainder contains a sum over the nontrivial zeros of $\zeta_K$.  In the present framework those zero terms should be interpreted as spectral contributions of the centered norm determinant rather than as part of the primitive Gamma skeleton.

For $K=\bbQ$ one has $\gamma_K=\gamma$, and \eqref{eq:number-field-stirling} reduces to the first two terms of the ordinary Stirling formula.

\subsubsection{Optimal energies and the Bhargava--Barnes lift}
Let $T=\{x_0,\ldots,x_n\}\subset\cO_K$ and define its energy ideal by
\begin{equation}\label{eq:optimal-energy}
  \mathcal E(T)
  =\left(\prod_{i\ne j}(x_i-x_j)\right).
\end{equation}
This is the square of the Vandermonde ideal, up to the harmless sign arising from reversing ordered pairs.  Byszewski, Fr\k{a}czyk, and Szumowicz proved that $T$ is $n$-optimal precisely when
\begin{equation}\label{eq:optimal-energy-factorials}
  \mathcal E(T)
  =\left(\prod_{m=1}^{n}[m]!_{\cO_K}\right)^2,
\end{equation}
and in that case this ideal divides the energy of every other $(n+1)$-point subset of $\cO_K$ \cite{ByszewskiFraczykSzumowicz2017}.

The identity has a direct Gamma-theoretic interpretation.  Let $\mathscr G_K$ be the Bhargava Gamma line of $S=\cO_K$, and define its second, or Barnes, lift by
\begin{equation}\label{eq:bhargava-barnes-recurrence}
  \mathscr B_K(1)\cong\cO_K,
  \qquad
  \mathscr B_K(z+1)
  \cong\mathscr G_K(z)\otimes\mathscr B_K(z).
\end{equation}
This is the line-valued analogue of the Barnes recurrence $G(z+1)=\Gamma(z)G(z)$ \cite{Barnes1901}.  Iterating \eqref{eq:bhargava-barnes-recurrence} gives
\begin{equation}\label{eq:bhargava-barnes-integers}
  \mathscr B_K(n+2)
  \cong\bigotimes_{m=1}^{n}\mathscr G_K(m+1)
  \cong\prod_{m=1}^{n}[m]!_{\cO_K}.
\end{equation}
Hence an $n$-optimal set satisfies the exact determinant-line identity
\begin{equation}\label{eq:energy-barnes}
  \boxed{\ \mathcal E(T)\cong\mathscr B_K(n+2)^{\otimes2}.\ }
\end{equation}
The square has a concrete combinatorial source: the energy uses ordered pairs $i\ne j$, whereas the Vandermonde determinant uses each unordered pair once.

Combining \eqref{eq:energy-barnes} with Lamoureux's formula yields the Barnes-level Stirling law
\begin{equation}\label{eq:optimal-energy-stirling}
  \log\Nm_{K/\bbQ}\mathcal E(T)
  =n^2\log n
   -\left(\frac32+\gamma_K-\gamma\right)n^2
   +o_K(n^2),
\end{equation}
which is the energy asymptotic obtained in \cite{ByszewskiFraczykSzumowicz2017}.  Indeed,
\[
  \log\Nm\mathcal E(T)
  =2\sum_{m=1}^{n}\log\Nm([m]!_{\cO_K}),
\]
and summing \eqref{eq:number-field-stirling} uses
\[
  \sum_{m\le n}m\log m
  =\frac12n^2\log n-\frac14n^2+O(n\log n).
\]
Even when an $n$-optimal set does not exist, the line $\mathscr B_K(n+2)^{\otimes2}$ remains the formal optimal-energy determinant and the divisibility statement gives the corresponding universal norm lower bound.

More generally, define the one-variable Bhargava--Barnes hierarchy by
\begin{equation}\label{eq:iterated-bhargava-gamma}
  \mathscr G_K^{(1)}=\mathscr G_K,
  \qquad
  \mathscr G_K^{(r+1)}(1)\cong\cO_K,
  \qquad
  \mathscr G_K^{(r+1)}(z+1)
  \cong\mathscr G_K^{(r)}(z)\otimes
        \mathscr G_K^{(r+1)}(z).
\end{equation}
Here the superscript labels iteration, not tensor power.  This hierarchy is distinct from Bhargava's multivariable factorials: it repeatedly integrates the same one-dimensional determinant transport, whereas the multivariable theory changes the underlying Vandermonde geometry.

\subsubsection{The Carlitz factorial}
Now let $R=A=\bbF_q[t]$.  Write
\[
  n=\sum_{i\ge0}n_iq^i,
  \qquad 0\le n_i<q,
\]
and let
\[
  D_0=1,
  \qquad
  D_i=\prod_{\substack{a\in A\ \mathrm{monic}\\ \deg a=i}}a
  =(t^{q^i}-t)D_{i-1}^{q}.
\]
Bhargava's simultaneous ordering is obtained by reading the coefficient vector of a polynomial as the base-$q$ expansion of its index, and its factorial is the Carlitz product
\begin{equation}\label{eq:carlitz-factorial}
  \Pi_A(n)=[n]!_{A,A}=\prod_{i\ge0}D_i^{n_i}.
\end{equation}
This is the classical Carlitz factorial \cite{Bhargava2000,Carlitz1938,Thakur1991}.
For a monic irreducible $P$ of degree $d$,
\begin{equation}\label{eq:carlitz-local}
  v_P(\Pi_A(n))
  =\sum_{j\ge1}\left\lfloor\frac{n}{q^{dj}}\right\rfloor,
\end{equation}
which is exactly \eqref{eq:polya-factorial} because $\Nm P=q^d$.

The digit carry gives an explicit recurrence.  If $v_q(n)$ denotes the exponent of $q$ dividing the integer index $n$, then
\begin{equation}\label{eq:carlitz-recurrence}
  \frac{\Pi_A(n)}{\Pi_A(n-1)}
  =\prod_{j=1}^{v_q(n)}(t^{q^j}-t),
\end{equation}
with the empty product equal to $1$.  For $q=2$, if $D_1=t^2-t$ and $D_2=(t^4-t)D_1^2$, the first values are
\[
  1,\ 1,\ D_1,\ D_1,\ D_2,\ D_2,\ D_2D_1,\ D_2D_1,\ldots.
\]

There is also an exact digital Stirling formula.  Since $\deg D_i=iq^i$,
\begin{equation}\label{eq:carlitz-degree}
  \deg\Pi_A(n)=\sum_{i\ge0}i n_iq^i.
\end{equation}
If $m=\lfloor\log_q n\rfloor$, then
\begin{equation}\label{eq:carlitz-digital-stirling}
  \deg\Pi_A(n)
  =mn-\sum_{i=0}^{m-1}(m-i)n_iq^i
  =n\log_q n+O_q(n).
\end{equation}
At the two extreme digital phases,
\[
  \deg\Pi_A(q^m)=mq^m,
\]
and
\[
  \deg\Pi_A(q^m-1)
  =\frac{(m-1)q^{m+1}-mq^m+q}{q-1}.
\]
Thus the function-field Stirling remainder is naturally digit-dependent and can have linear-size log-periodic fluctuations.  Goss and Thakur constructed infinite- and finite-place interpolations of these factorials, including reflection and multiplication analogues \cite{Goss1988,Thakur1991}.  In the present language, those functions are scalar trivializations of the place-completed Carlitz determinant line.

\section{The cubic-power set}

Let
\[
  C=\{m^3:m\in\bbZ\}.
\]
Write
\[
  \nu_p(n)=v_p(n!_C),
  \qquad
  a_{C,p}(n)=\nu_p(n)-\nu_p(n-1).
\]
Fares and Johnson developed the characteristic sequences and $p$-orderings of $d$th powers \cite{FaresJohnson2012}; Takeda emphasizes that the higher-degree global asymptotic problem splits according to the residue of $p$ modulo $d$ \cite{Takeda2023}.

\subsection{The cubic-residue sector}

For $p\equiv1\pmod3$, the known local formula is
\begin{equation}\label{eq:cube-split-local}
  \nu_p(n)=v_p((3n)!).
\end{equation}
The complete-block digit statistics are generated by a three-state carry matrix.

Write $p=3h+1$ and define
\[
  A_p(y)=\sum_{j=0}^{h}y^{3j},\qquad
  B_p(y)=y^2\sum_{j=0}^{h-1}y^{3j},\qquad
  C_p(y)=y\sum_{j=0}^{h-1}y^{3j}.
\]
Set
\begin{equation}\label{eq:carry-matrix}
  T_p(y)=
  \begin{pmatrix}
    A_p(y)&B_p(y)&C_p(y)\\
    C_p(y)&A_p(y)&B_p(y)\\
    B_p(y)&C_p(y)&A_p(y)
  \end{pmatrix}.
\end{equation}

\begin{proposition}[Exact carry determinant]\label{prop:carry-det}
Let $v(y)=(1,y,y^2)^t$ and $e_0=(1,0,0)^t$.  Then
\[
  F_{p,k}(y):=e_0^tT_p(y)^kv(y)
  =\sum_{0\le n<p^k}y^{s_p(3n)},
\]
and
\begin{equation}\label{eq:carry-resolvent}
  \sum_{k\ge0}F_{p,k}(y)z^k
  =\frac{(1-z)^2}{\det(I-zT_p(y))}.
\end{equation}
Moreover,
\begin{align}\label{eq:carry-det-factor}
  \det(I-zT_p(y))
  &=(1-z(A+B+C))\notag\\
  &\quad\times\bigl(1-z(2A-B-C)
  +z^2(A^2+B^2+C^2-AB-AC-BC)\bigr),
\end{align}
where $A=A_p(y)$, $B=B_p(y)$, and $C=C_p(y)$.
\end{proposition}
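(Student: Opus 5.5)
The plan is to realize $n\mapsto 3n$ in base $p$ as a finite automaton whose state is the carry, identify $T_p(y)$ as its transfer matrix, and then read off \eqref{eq:carry-resolvent} and \eqref{eq:carry-det-factor} from the circulant structure of $T_p(y)$.

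\emph{Step 1: carry automaton.} Write $n=\sum_{i<k}d_ip^i$ with $0\le d_i\le p-1$, and multiply by $3$ starting from the least significant digit with incoming carry $c_0=0$. At stage $i$ one has $3d_i+c_i=e_i+pc_{i+1}$, where $e_i$ is the output digit and $c_{i+1}\in\{0,1,2\}$ is the outgoing carry. After $k$ stages the leftover carry $c_k<p$ is a single extra digit. Hence
\[
  s_p(3n)=\sum_{i<k}e_i+c_k .
\]
Define $T[c,c']=\sum y^{3d+c-pc'}$, summed over the $d\in\{0,\dots,p-1\}$ with $\lfloor(3d+c)/p\rfloor=c'$. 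Then $\sum_{0\le n<p^k}y^{s_p(3n)}=e_0^tT^kv(y)$: the factor $e_0$ records the initial carry $0$, and $v=(1,y,y^2)^t$ records the terminal digit $y^{c_k}$.

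\emph{Step 2: identify $T$ with $T_p(y)$.} For row $c=0$ and $p=3h+1$, the digits $d=0,\dots,h$ give $e=3j$ with $c'=0$, which is $A_p$. The digits $d=h+1,\dots,2h$ give $e=3(d-h)-1\in\{2,5,\dots,3h-1\}$ with $c'=1$, which is $B_p$. The digits $d=2h+1,\dots,3h$ give $e\in\{1,4,\dots,3h-2\}$ with $c'=2$, which is $C_p$. Rows $c=1,2$ are checked the same way. The structural reason they form a circulant is that $e\equiv c-c'\pmod 3$ (since $p\equiv1$), so the residue class of the exponents depends only on $c-c'$. A short range check then shows each row is a cyclic shift of $(A,B,C)$.

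\emph{Step 3: determinant.} Since $T_p$ is circulant, its eigenvectors are $(1,\omega^j,\omega^{2j})$ with eigenvalues $\lambda_j=A+B\omega^j+C\omega^{2j}$, where $\omega=e^{2\pi i/3}$. The factor $1-z\lambda_0$ gives $1-z(A+B+C)$. The product $(1-z\lambda_1)(1-z\lambda_2)$ has linear coefficient $-(2A-B-C)$ and quadratic coefficient $|\lambda_1|^2=A^2+B^2+C^2-AB-AC-BC$. This proves \eqref{eq:carry-det-factor}.

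\emph{Step 4: resolvent numerator.} Summing the geometric series gives
\[
  \sum_kF_{p,k}z^k=\frac{e_0^t\,\mathrm{adj}(I-zT_p)\,v}{\det(I-zT_p)},
\]
so it remains to show that the quadratic polynomial $N(z)=e_0^t\,\mathrm{adj}(I-zT_p)\,v$ equals $(1-z)^2$. I would expand $N(z)$ explicitly in $A,B,C,y$. I would then simplify using the closed forms
\[
  (1-y^3)A=1-y^{p+2},\qquad (1-y^3)B=y^2-y^{p+1},\qquad (1-y^3)C=y-y^{p},
\]
together with $A+B+C=(1-y^p)/(1-y)$. Alternatively, one can diagonalize: $N(z)/\det(I-zT_p)=\tfrac13\sum_j\hat v_j/(1-z\lambda_j)$ with $\hat v_j=1+y\omega^{-j}+y^2\omega^{-2j}$, and then combine the three terms over the common denominator.

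This last verification is the main obstacle. The normalization of $v$ and of the starting state must be matched exactly; in particular one must decide whether the terminal carry digit is counted, or whether $F_{p,k}$ should use $e_0^tT^k\mathbf 1$. I would first test $p=7$, $k=0,1$ numerically, because $F_{p,0}=1$ and $F_{p,1}$ must match the $z$-coefficient $\lambda_0+(2A-B-C)-2$ of $(1-z)^2/\det$. If the identity fails in that form, I would correct the numerator to the actual $N(z)$ rather than force $(1-z)^2$.
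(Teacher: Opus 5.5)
Your Steps 1--3 are correct and follow the paper's own argument: the carry automaton with the terminal carry digit, the geometric series with the adjugate, and the circulant Fourier modes. One small correction in Step 3: the quadratic coefficient should be written as $\lambda_1\lambda_2$, which equals $|\lambda_1|^2$ only for real $y$.

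The gap is Step 4. The numerator $(1-z)^2$ is the only nontrivial assertion in \eqref{eq:carry-resolvent}, and you leave it as an unexecuted plan, even allowing that it may fail. It does not fail, and the missing idea is one relation among the entries. With $S=\sum_{j=0}^{h-1}y^{3j}$ one has $A=1+y^3S$, $B=y^2S$, $C=yS$, hence $yB=y^2C=A-1$. Writing $T=T_p(y)$, Cayley--Hamilton for $3\times3$ matrices gives
\[
  \mathrm{adj}(I-zT)=I-z\bigl(\mathrm{tr}(T)\,I-T\bigr)+z^2\,\mathrm{adj}(T).
\]
Since $\mathrm{tr}(T)=3A$, this yields
\[
  N(z)=1-z\bigl(2A-yB-y^2C\bigr)+z^2\bigl[(A^2-BC)+y(C^2-AB)+y^2(B^2-AC)\bigr].
\]
The linear coefficient is $-2$ because $yB+y^2C=2(A-1)$. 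The quadratic coefficient collapses to $A^2-2y^3AS+y^6S^2=(A-y^3S)^2=1$. Your diagonalization route gives the same two coefficients, since $\tfrac13\sum_j\hat v_j\lambda_j=A+yB+y^2C$.

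This also settles your normalization worry. Counting the terminal carry through $v=(1,y,y^2)^t$ is exactly what produces the $yB+y^2C$ cancellation, so $v$ is the right vector. If you used $\mathbf 1$ in place of $v$, you would get $1/(1-z\lambda_0)$, whose numerator over $\det(I-zT_p(y))$ is $(1-z\lambda_1)(1-z\lambda_2)$. That agrees with $(1-z)^2$ at $y=1$ but not for general $y$. So your planned sanity check must be done as polynomials in $y$, not numerically at $y=1$. For instance, $F_{7,1}=1+3y^3+3y^6=3A-2$, which does match the $z$-coefficient of $(1-z)^2/\det(I-zT_7(y))$.
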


\begin{proof}
Multiplication by $3$ in base $p$ has carry states $0,1,2$.  The entries of $T_p(y)$ enumerate input digits by output digit and outgoing carry.  Iterating the automaton gives the first identity.  The resolvent identity follows from
\[
  \sum_{k\ge0}e_0^t(zT_p(y))^kv(y)
  =e_0^t(I-zT_p(y))^{-1}v(y),
\]
and a direct adjugate computation gives the numerator $(1-z)^2$.  Since $T_p(y)$ is circulant, its three Fourier eigenmodes yield \eqref{eq:carry-det-factor}.
\end{proof}

At $y=1$,
\[
  \det(I-zT_p(1))=(1-pz)(1-z)^2,
\]
so the boundary numerator cancels two unit carry modes and leaves the Euler-type factor $(1-pz)^{-1}$.

\begin{corollary}[Complete-block digit sum]\label{cor:digit-sum}
For $p\equiv1\pmod3$,
\[
  \sum_{0\le n<p^k}s_p(3n)
  =\frac{p-1}{2}kp^k+p^k-1.
\]
Consequently,
\[
  \sum_{0\le n<p^k}\nu_p(n)
  =\frac{3p^{2k}-(p-1)kp^k-5p^k+2}{2(p-1)}.
\]
\end{corollary}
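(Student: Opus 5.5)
The plan is to prove the digit-sum identity by an elementary carry count, using Proposition~\ref{prop:carry-det} only as a cross-check. The second identity then follows from Legendre's formula together with \eqref{eq:cube-split-local}.

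\textbf{Step 1: reduce to a carry count.} For $0\le n<p^k$, multiplication by $3$ in base $p$ satisfies
\[
  s_p(3n)=3s_p(n)-(p-1)\,c(n),
\]
where $c(n)$ is the total carry value. Writing $c_j(n)\in\{0,1,2\}$ for the carry out of the $(j-1)$st digit, we have
\[
  c(n)=\sum_{j=1}^{k}c_j(n),\qquad c_j(n)=\left\lfloor\frac{3\,(n\bmod p^j)}{p^j}\right\rfloor.
\]
Since the digits of $n$ range uniformly over the full block,
\[
  \sum_{0\le n<p^k}s_p(n)=\frac{p-1}{2}\,k\,p^k.
\]
It therefore suffices to compute $\sum_{n<p^k}c(n)$.

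\textbf{Step 2: sum the carries.} As $n$ runs over $[0,p^k)$, the residue $n\bmod p^j$ takes each value in $[0,p^j)$ exactly $p^{k-j}$ times. Hence
\[
  \sum_{n<p^k}c_j(n)=p^{k-j}\sum_{r<p^j}\left\lfloor\frac{3r}{p^j}\right\rfloor.
\]
The inner sum counts the $r$ with $3r\ge p^j$ plus the $r$ with $3r\ge 2p^j$. Because $p\equiv1\pmod3$, we have $p^j\equiv1\pmod 3$, so the two thresholds are $(p^j+2)/3$ and $(2p^j+1)/3$. The two counts are therefore $(2p^j-2)/3$ and $(p^j-1)/3$, with total $p^j-1$. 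Summing over $j$ gives
\[
  \sum_{n<p^k}c(n)=\sum_{j=1}^k p^{k-j}(p^j-1)=k p^k-\frac{p^k-1}{p-1}.
\]

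\textbf{Step 3: assemble the digit sum.} Substituting into Step 1 gives
\[
  \frac{3(p-1)}{2}kp^k-(p-1)kp^k+p^k-1=\frac{p-1}{2}kp^k+p^k-1,
\]
which is the first claim.

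\textbf{Step 4: the valuation sum.} By \eqref{eq:cube-split-local} and Legendre's formula,
\[
  \nu_p(n)=\frac{3n-s_p(3n)}{p-1}.
\]
Summing over $n<p^k$, using $\sum n=p^k(p^k-1)/2$ and the first claim, gives
\[
  \frac{3p^k(p^k-1)/2-\frac{p-1}{2}kp^k-p^k+1}{p-1},
\]
which simplifies to the stated closed form.

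The only step needing care is the residue-class threshold count in Step~2. This is exactly where $p\equiv1\pmod 3$ enters, and an off-by-one error there would shift the constant term.

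\textbf{Cross-check via the carry determinant.} Differentiating \eqref{eq:carry-resolvent} in $y$ at $y=1$ should give the same result. At $y=1$ one has $A+B+C=p$ and $(A+B+C)'=p(p-1)/2$, and the quadratic factor reduces to $(1-z)^2$. The expected generating function is
\[
  \frac{p(p-1)}{2}\frac{z}{(1-pz)^2}+\frac{1}{1-pz}-\frac1{1-z},
\]
which matches the first claim. Confirming this requires computing the $y$-derivative of the quadratic factor, which has not been done here.
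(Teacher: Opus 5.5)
Your proof is correct, but it reaches the digit-sum identity by a different route than the paper. The paper differentiates the carry resolvent \eqref{eq:carry-resolvent} in $y$ at $y=1$. The terms $\frac{p-1}{2}kp^k$, $p^k$ and $-1$ then come from the poles at $z=1/p$ (double) and $z=1$. This keeps the corollary inside the determinant framework. It does, however, depend on the adjugate computation in Proposition~\ref{prop:carry-det} and on the $y$-derivative of the quadratic factor, and the paper writes out neither.

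You bypass the transfer matrix entirely. The identity $s_p(3n)=3s_p(n)-(p-1)c(n)$ together with equidistribution of $n\bmod p^j$ reduces everything to $\sum_{r<p^j}\lfloor 3r/p^j\rfloor=p^j-1$. This is self-contained and easy to check, and it proves more than you claim. Since $\sum_{r<q}\lfloor 3r/q\rfloor=q-1$ whenever $3\nmid q$, the first identity holds for every prime $p\ge5$. So your remark that Step~2 is ``exactly where $p\equiv1\pmod 3$ enters'' is inaccurate. There the congruence only fixes the explicit thresholds. It is genuinely needed only in Step~4, through $\nu_p(n)=v_p((3n)!)$.

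Your cross-check also closes. At $y=1$ one has $(2A-B-C)'=(A^2+B^2+C^2-AB-AC-BC)'=p-1$. So the quadratic factor contributes $-(p-1)z(1-z)(1-pz)$ to $\partial_y\det(I-zT_p(y))$. That yields exactly the term $\frac{1}{1-pz}-\frac{1}{1-z}$ in your generating function.
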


\begin{proof}
Differentiate \eqref{eq:carry-resolvent} at $y=1$ and use \eqref{eq:cube-split-local} together with Legendre's digit-sum formula.
\end{proof}

\subsection{Character projection and the Archimedean half-determinant}

Let
\[
  P_3(n)=(3n)(3n-1)(3n-2)
\]
and let $\chi=\chi_{-3}$.  Define
\[
  \ell_\chi(m)=\sum_{p\ne3}\chi(p)v_p(m)\log p.
\]
For $p\equiv2\pmod3$, write
\[
  \delta_p(n)=a_{C,p}(n)-v_p(n).
\]

\begin{proposition}[Exact character decomposition]\label{prop:character-decomp}
The cube recurrence satisfies
\begin{equation}\label{eq:cube-recurrence-decomp}
  \log\frac{n!_C}{(n-1)!_C}
  =\frac12\log\bigl(nP_3(n)\bigr)+\varepsilon_C(n),
\end{equation}
where
\begin{align}\label{eq:finite-error}
  \varepsilon_C(n)
  &=\frac12\bigl(\ell_\chi(P_3(n))-\ell_\chi(n)\bigr)\notag\\
  &\quad+\sum_{p\equiv2\,(3)}\delta_p(n)\log p\notag\\
  &\quad+\left(a_{C,3}(n)-\frac12v_3(nP_3(n))\right)\log3.
\end{align}
\end{proposition}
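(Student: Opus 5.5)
This is essentially a bookkeeping identity: both sides will be decomposed prime by prime, and the statement reduces to a comparison of coefficients of $\log p$. Write
\[
  \log\frac{n!_C}{(n-1)!_C}=\sum_p a_{C,p}(n)\log p ,
\]
so it suffices to check, for each prime $p$, that the coefficient of $\log p$ on the right of \eqref{eq:cube-recurrence-decomp} equals $a_{C,p}(n)$. On the right, $\tfrac12\log(nP_3(n))$ contributes $\tfrac12 v_p(nP_3(n))=\tfrac12\bigl(v_p(n)+v_p(P_3(n))\bigr)$. The term $\tfrac12(\ell_\chi(P_3(n))-\ell_\chi(n))$ contributes $\tfrac12\chi(p)\bigl(v_p(P_3(n))-v_p(n)\bigr)$ for $p\ne3$. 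The remaining sums in \eqref{eq:finite-error} contribute only at $p\equiv2\pmod3$ and at $p=3$. We split into three cases.

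\emph{Case $p\equiv1\pmod3$.} Here $\chi(p)=1$, so the total coefficient is $v_p(P_3(n))$. By \eqref{eq:cube-split-local},
\[
  a_{C,p}(n)=v_p((3n)!)-v_p((3n-3)!)=v_p\bigl((3n)(3n-1)(3n-2)\bigr)=v_p(P_3(n)),
\]
which matches.

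\emph{Case $p\equiv2\pmod3$.} Here $\chi(p)=-1$, so the first two contributions give
\[
  \tfrac12\bigl(v_p(n)+v_p(P_3)\bigr)-\tfrac12\bigl(v_p(P_3)-v_p(n)\bigr)=v_p(n).
\]
Adding $\delta_p(n)=a_{C,p}(n)-v_p(n)$ returns $a_{C,p}(n)$. This is tautological by the definition of $\delta_p$.

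\emph{Case $p=3$.} The character terms vanish, since $\ell_\chi$ omits $p=3$ (equivalently $\chi(3)=0$). What remains is
\[
  \tfrac12 v_3(nP_3(n))+\Bigl(a_{C,3}(n)-\tfrac12v_3(nP_3(n))\Bigr)=a_{C,3}(n),
\]
again tautological.

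Summing over all primes then gives the identity. Since $n$ and $P_3(n)$ are nonzero for $n\ge1$, only finitely many primes contribute, so all sums are finite and no convergence issue arises.

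The only substantive input is the split-prime case, which rests on the local formula \eqref{eq:cube-split-local}, quoted from Fares--Johnson and taken as given. The main point of care is the first-difference computation $v_p((3n)!)-v_p((3n-3)!)=v_p(P_3(n))$, together with checking that the convention $\chi(3)=0$ and the definition of $\ell_\chi$ handle $p=3$ and $p\equiv2\pmod 3$ consistently. Everything else is bookkeeping.
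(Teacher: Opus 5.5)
Your proposal is correct and follows the paper's own argument: the paper applies the character projectors $\frac{1\pm\chi(p)}{2}$ prime by prime, invokes \eqref{eq:cube-split-local} only in the split sector, and keeps $p=3$ separate, so your three-case coefficient check is that same argument written out. One small addition: finiteness of $\sum_{p\equiv2\,(3)}\delta_p(n)\log p$ also needs that only finitely many primes divide $n!_C/(n-1)!_C$, since $\delta_p(n)$ involves $a_{C,p}(n)$ and not just $v_p(n)$ and $v_p(P_3(n))$.
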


\begin{proof}
Use \eqref{eq:cube-split-local} in the split sector and apply the two character projectors
\[
  \mathbf 1_{p\equiv1(3)}=\frac{1+\chi(p)}2,
  \qquad
  \mathbf 1_{p\equiv2(3)}=\frac{1-\chi(p)}2
\]
to the prime factorizations of $P_3(n)$ and $n$, keeping the ramified prime $3$ separate.
\end{proof}

The trivial-character multiplier is therefore
\[
  M_{C,\infty}(z)
  =\sqrt{z(3z)(3z-1)(3z-2)}.
\]

\begin{corollary}[Cube Archimedean determinant]\label{cor:cube-arch}
The normalized solution of
\[
  G(z+1)=M_{C,\infty}(z)G(z),
  \qquad G(1)=1,
\]
is
\begin{equation}\label{eq:cube-arch-gamma}
  \Gamma_{C,\infty}(z)
  =\sqrt{\Gamma(z)\Gamma(3z-2)}.
\end{equation}
Equivalently, its rational divisor is
\[
  [0]+\frac12\left[\frac13\right]
  +\frac12\left[\frac23\right].
\]
\end{corollary}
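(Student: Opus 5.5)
We verify the functional equation directly, check the normalization, and then address uniqueness and the divisor.

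\emph{Functional equation.} Set $G(z)=\sqrt{\Gamma(z)\Gamma(3z-2)}$, using the principal branch on $z>2/3$ and continuing analytically, on a double cover if needed. By $\Gamma(z+1)=z\Gamma(z)$,
\[
  \Gamma(3z+1)=(3z)(3z-1)(3z-2)\Gamma(3z-2),
\]
since $3(z+1)-2=3z+1$. Therefore
\[
  G(z+1)^2=z\Gamma(z)\cdot 3z(3z-1)(3z-2)\Gamma(3z-2)=M_{C,\infty}(z)^2G(z)^2 .
\]
Taking positive square roots on the real half-line $z>2/3$ gives $G(z+1)=M_{C,\infty}(z)G(z)$. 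The identity then propagates to the double-cover continuation.

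\emph{Normalization.} We have $G(1)=\sqrt{\Gamma(1)\Gamma(1)}=1$. Note also that $M_{C,\infty}$ is exactly the trivial-character multiplier read off from \eqref{eq:cube-recurrence-decomp} in Proposition~\ref{prop:character-decomp}: $\tfrac12\log(nP_3(n))$ is $\log M_{C,\infty}(n)$.

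\emph{Uniqueness.} This is the main obstacle, since solutions of a first-order difference equation are only determined up to a $1$-periodic factor. We would impose the growth/uniqueness axiom (G5) in the form of a Bohr--Mollerup condition: $\log G$ convex on $z>1$. Here $\log G=\tfrac12(\log\Gamma(z)+\log\Gamma(3z-2))$ is a sum of two log-convex terms (the second precomposed with an affine map), hence convex. We would then repeat the standard Bohr--Mollerup argument for the multiplier $M_{C,\infty}$. Concretely, if $H$ is another normalized solution that is log-convex, then $\log(H/G)$ is $1$-periodic. Moreover, $\log M_{C,\infty}(z+t)-\log M_{C,\infty}(z)\to0$ as $z\to\infty$ uniformly for $t\in[0,1]$. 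The convexity sandwich at large $z$ then forces the periodic difference to vanish. Alternatively, one can use the Stirling-type growth class of Theorem~\ref{thm:stirling-principle} to exclude nonconstant periodic factors of order $e^{2\pi i z}$.

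\emph{Divisor.} The rational divisor statement is bookkeeping on poles, normalized as in the ordinary case where $\Gamma(z)$ carries $[0]$ with the understood translates $-\bbZ_{\ge0}$. Here $\Gamma(z)$ contributes $[0]$. The factor $\Gamma(3z-2)$ has poles where $3z-2\in-\bbZ_{\ge0}$, that is, at $z\in\tfrac23-\tfrac13\bbZ_{\ge0}$. Modulo the integer translates already accounted for, these points fall into the three classes $\tfrac23$, $\tfrac13$ and $0$ mod $\bbZ$. So $\Gamma(3z-2)$ contributes $[0]+[\tfrac13]+[\tfrac23]$ in the orbit-divisor normalization, which is the multiplication formula $\Gamma(3w)\propto 3^{3w}\Gamma(w)\Gamma(w+\tfrac13)\Gamma(w+\tfrac23)$ applied with $w=z-\tfrac23$. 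Taking half and adding the $\Gamma(z)$ part gives $\tfrac12[0]+\tfrac12[0]+\tfrac12[\tfrac13]+\tfrac12[\tfrac23]=[0]+\tfrac12[\tfrac13]+\tfrac12[\tfrac23]$. The exponential factor $3^{3w}$ from Gauss multiplication is absorbed into the constant $\beta_C$ and does not affect the divisor.
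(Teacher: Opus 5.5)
Your proposal is correct and follows the paper's own route: the functional equation comes from $\Gamma(z+1)=z\Gamma(z)$ applied to both factors, the divisor comes from the triplication formula, and you additionally supply a Bohr--Mollerup (Krull-type) uniqueness argument that the paper leaves implicit in the phrase ``the normalized solution.'' There are two small slips in the divisor paragraph. First, the factor $3^{3w}$ is not absorbed into $\beta_C$: since $\beta_C$ is normalized against $\sqrt{N!(3N)!}$, that growth stays inside $\Gamma(3z-2)$ and is simply invisible to the divisor. Second, the points $0,\tfrac13,\tfrac23$ must be read as the actual rightmost poles, i.e.\ the roots of $M_{C,\infty}$, not as classes mod $\bbZ$, because $[a]$ and $[a+1]$ are different divisors.
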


\begin{proof}
The functional equation follows from the ordinary Gamma recurrence.  The divisor statement follows from the triplication formula.
\end{proof}

The denominator $2$ is the denominator of the character idempotents for
\[
  \Gal(\bbQ(\zeta_3)/\bbQ)\simeq C_2.
\]
Thus the scalar half-determinant is naturally a section of a square-root line.  Its square is single-valued:
\[
  \wideGamma_{C,\infty}(z)
  =\Gamma(z)\Gamma(3z-2).
\]

\subsection{The exact finite correction}

Define the integral squared correction cocycle
\begin{equation}\label{eq:cube-dp}
  d_p(n)=2a_{C,p}(n)-v_p(n)-v_p(P_3(n)).
\end{equation}
Then
\begin{equation}\label{eq:cube-relative-integer}
  \Delta_C(N+1)
  :=\prod_p p^{\sum_{n\le N}d_p(n)}
  =\frac{(N!_C)^2}{N!(3N)!}.
\end{equation}

\begin{proposition}[Split-prime cycle towers]\label{prop:split-cycle-towers}
For $p\equiv1\pmod3$,
\[
  d_p(n)
  =\sum_{j\ge1}
  \left(
  \mathbf1_{n\equiv3^{-1}\!\!\pmod{p^j}}
  +\mathbf1_{n\equiv2\cdot3^{-1}\!\!\pmod{p^j}}
  \right).
\]
Hence the split-prime correction is a product of two marked cyclotomic cycle towers.
\end{proposition}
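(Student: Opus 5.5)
The plan is to compute each term of $d_p(n)=2a_{C,p}(n)-v_p(n)-v_p(P_3(n))$ in the split sector and compare with the indicator sums. By \eqref{eq:cube-split-local}, $\nu_p(n)=v_p((3n)!)$. Taking first differences gives
\[
  a_{C,p}(n)=v_p((3n)!)-v_p((3n-3)!)=v_p\bigl((3n)(3n-1)(3n-2)\bigr)=v_p(P_3(n)).
\]
Hence
\[
  d_p(n)=v_p(P_3(n))-v_p(n).
\]

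Next I expand the valuations by the layer-cake formula $v_p(m)=\sum_{j\ge1}\mathbf 1_{p^j\mid m}$. Since $p\ne3$, $3$ is a unit modulo every $p^j$, so $v_p(3n)=v_p(n)$. The three factors $3n,3n-1,3n-2$ differ by $1$ or $2$, and $p\ge7$ is coprime to both. So for each $j$ at most one factor is divisible by $p^j$, and
\[
  v_p(P_3(n))=\sum_{j\ge1}\bigl(\mathbf 1_{p^j\mid 3n}+\mathbf 1_{p^j\mid 3n-1}+\mathbf 1_{p^j\mid 3n-2}\bigr).
\]
The first indicator is $\mathbf 1_{p^j\mid n}$, which cancels against $v_p(n)$. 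The second is $\mathbf 1_{n\equiv 3^{-1}\pmod{p^j}}$ and the third is $\mathbf 1_{n\equiv 2\cdot3^{-1}\pmod{p^j}}$, where the inverses are taken in $(\bbZ/p^j)^\times$. This gives exactly the displayed formula.

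For the interpretation, note that for fixed $j$ the indicator of a single residue class modulo $p^j$ is the marked point of a cycle of length $p^j$. The residues $3^{-1}$ and $2\cdot3^{-1}$ modulo $p^{j+1}$ reduce to the corresponding residues modulo $p^j$, so each family is a compatible tower, just as in the ordinary factorial's digit towers. This gives two marked cyclotomic cycle towers, the analogue of Proposition~\ref{prop:polya-factorial} with a shifted marking.

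The only real input is the local formula \eqref{eq:cube-split-local}, which the paper quotes as known from Fares--Johnson. Given that, there is no serious obstacle; the one point to check carefully is the disjointness of the divisibility events, which needs $p\nmid 1,2$ and $p\ne3$, both guaranteed by $p\equiv1\pmod3$.
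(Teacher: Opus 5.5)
Your proof is correct and follows the paper's argument. You use $\nu_p(n)=v_p((3n)!)$ to get $a_{C,p}(n)=v_p(P_3(n))$, reduce via $v_p(3n)=v_p(n)$ to $d_p(n)=v_p(3n-1)+v_p(3n-2)$, and expand each valuation into congruence indicators modulo $p^j$. The disjointness of the divisibility events that you single out is not actually needed, since $v_p$ is additive on the product $P_3(n)$; only $p\ne 3$ matters.
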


\begin{proof}
By \eqref{eq:cube-split-local},
\[
  d_p(n)=v_p(P_3(n))-v_p(n)
  =v_p(3n-1)+v_p(3n-2),
\]
and each valuation is expanded into congruence indicators modulo $p^j$.
\end{proof}

For $p\equiv2\pmod3$ and $p=3$, the correction sequence is signed and nonlinear.  Exact local computations show no small finite-state compression.  This leads to the central open problem.

\begin{problem}[Inert-prime renewal determinant]\label{prob:inert-det}
Construct, from the intrinsic $p$-ordering renewal system for each $p\equiv2\pmod3$, a normalized relative determinant section $\Delta^0_{C,p}(z)$ interpolating the centered cumulative cocycle of \eqref{eq:cube-dp}.  Prove normal convergence of the renormalized prime product and a uniqueness theorem excluding integer-vanishing entire factors.
\end{problem}

\subsection{The linear constant}

Let $c_p$ denote the local asymptotic slope of $\nu_p(n)$ when it exists.  The renewal calculation predicts
\[
  c_3=\sqrt6-\frac32,
\]
and, for $p\equiv2\pmod3$,
\[
  c_p=\frac{-3+\sqrt{9+\dfrac{12p}{(p-1)^2}}}{2}.
\]
This leads to the following candidate normalization constant.

\begin{conjecture}[Cube finite-place mean]\label{conj:beta}
The limit
\[
  \beta_C
  =\lim_{N\to\infty}
  \frac1N\log\frac{N!_C}{\sqrt{N!(3N)!}}
\]
exists and equals
\begin{align}\label{eq:beta-L-form}
  \beta_C
  &=\left(\sqrt6-\frac52\right)\log3
  -\frac{L'}{L}(1,\chi_{-3})\notag\\
  &\quad+\sum_{p\equiv2\,(3)}
  \left(
  c_p-\frac1{p-1}-\frac2{p^2-1}
  \right)\log p.
\end{align}
The remaining prime sum is absolutely convergent.
\end{conjecture}

The numerical value from primes up to $10^7$ is
\[
  \beta_C\approx-0.7737452033.
\]
The decomposition separates a ramified term, the Galois term $-L'/L(1,\chi_{-3})$, and the nonlinear inert-prime renewal defect.

\begin{figure}[t]
  \centering
  \includegraphics[width=.76\textwidth]{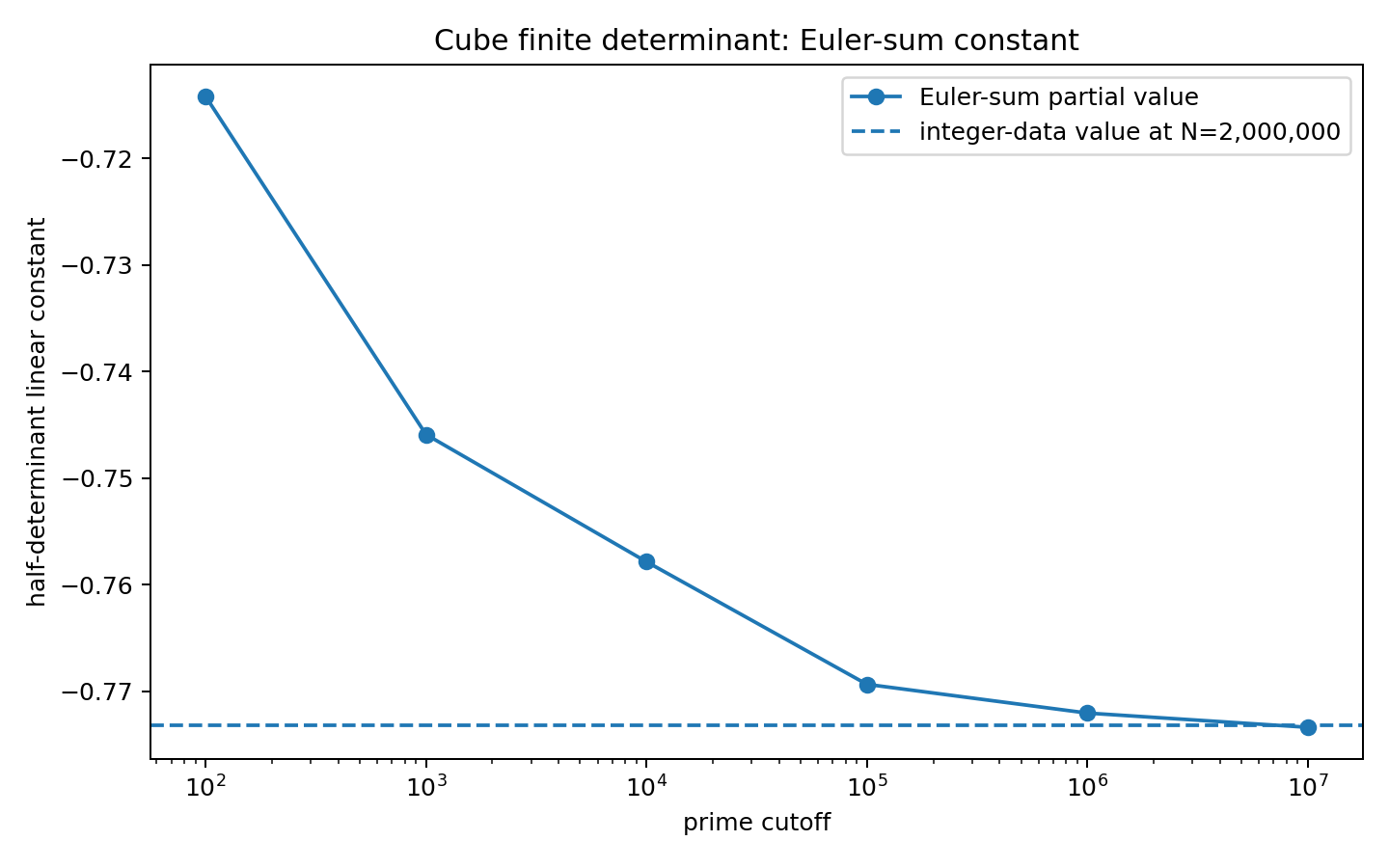}
  \caption{Numerical convergence of the candidate cube finite-place constant.  This figure is evidence for Conjecture~\ref{conj:beta}, not a proof.}
  \label{fig:beta}
\end{figure}

\subsection{Candidate completed cube determinant}

Assuming \cref{prob:inert-det,conj:beta}, the scalar squared cube Gamma should be
\begin{equation}\label{eq:cube-completed}
  \wideGamma_C(z)
  =e^{2\beta_C(z-1)}
  \Gamma(z)\Gamma(3z-2)
  \Delta^0_{C,\mathrm{fin}}(z),
\end{equation}
with
\[
  \wideGamma_C(N+1)=(N!_C)^2.
\]
The Bhargava Gamma object is a normalized section of the square-root line
\[
  \Gamma_C^{\otimes2}=\wideGamma_C.
\]
The positive real lift at the positive integers satisfies $\Gamma_C(N+1)=N!_C$.

\section{Admissibility boundaries}

The existence of $n!_S$ does not imply determinant-admissibility.  We record several possible failures.

\begin{enumerate}[label=(F\arabic*),leftmargin=2.7em]
  \item The local slopes $\nu_{S,\mathfrak p}(n)/n$ may fail to converge.
  \item The local ordering sequence may admit no canonical finite-state, renewal, or regularly varying model.
  \item The primitive infinite-place spectrum may be nondiscrete or nonregularizable.
  \item The rational primitive multiplicities may have unbounded denominators, so no finite $q_S$ clears them.
  \item The centered global determinant product may have no canonical renormalization.
  \item The factorial growth may jump among incompatible scales, preventing a stable Stirling hierarchy.
\end{enumerate}

Rapid growth alone is not the obstruction: geometric progressions grow exponentially and lead to $q$-Gamma determinants.  The obstruction is irregularity of the local place geometry.  A set such as
\[
  \{1,2^2,3^{3^3},4^{4^{4^4}},\ldots\}
\]
is a natural candidate for failure because its successive differences can have unrelated local factorizations and no stable renewal structure.

\section{Number-field and function-field completion}

The one-dimensional number-field and function-field cases belong to the same theory.  The finite-place factorial remains ideal-valued, while the completion is supplied by the selected infinite places.

\subsection{Number fields}

For $R=\cO_K$, the completed Gamma line should be an Arakelov line
\[
  \overline{\mathscr G}_{S,\cO_K}(z)
  =\left(
  \text{finite factorial lattice},
  \text{spectral metrics at }v\mid\infty
  \right).
\]
The Stirling formula is an asymptotic for its Arakelov degree, while the sequence of ideal classes
\[
  [[n]!_{S,\cO_K}]\in\Pic(\cO_K)
\]
records an additional discrete component invisible to the norm.

For the basic pair $S=R=\cO_K$, \eqref{eq:number-field-dirichlet} identifies the norm recurrence with the product of the ordinary zeta factor and the logarithmic derivative of the Dedekind zeta function.  Lamoureux's theorem \eqref{eq:number-field-stirling} fixes the first two metric coefficients, while his zero-explicit refinement shows that the centered determinant retains genuinely global spectral information \cite{Lamoureux2014}.  The pole data \eqref{eq:number-field-laurent} therefore suggest the primitive norm-side completion
\begin{equation}\label{eq:number-field-gamma-skeleton}
  \Gamma_{K,\infty}^{\Nm}(z)
  =e^{(\gamma-\gamma_K)(z-1)}\Gamma(z),
\end{equation}
up to a centered finite determinant whose logarithm is $o(z)$.  This does not trivialize the ideal line: in a non-P\'olya field the fibers still carry nontrivial classes.  Rather, \eqref{eq:number-field-gamma-skeleton} is the scalar metric shadow of the completed line.  The example $K=\bbQ(\sqrt{-5})$, where $[2]!_{\cO_K}$ is nonprincipal, shows that the ideal-class component and the metric component are genuinely independent.

A reflection formula, if it exists, should therefore consist of two simultaneous statements: a duality isomorphism of the Arakelov determinant lines and a scalar reflection identity for their norms.  The former retains unit and class-group information that the latter necessarily forgets.

\subsection{Function fields}

For a global function field with constant field $\bbF_q$, a chosen place or finite set of places at infinity replaces the Archimedean embeddings.  If $A$ is the corresponding ring of functions regular away from infinity, then for $S=A$ the finite factorial line again satisfies \eqref{eq:polya-factorial}.  Put
\[
  b_A(n)=\deg\bigl([n]!_A[n-1]!_A^{-1}\bigr).
\]
Writing
\[
  \zeta_A(s)=\prod_{\mathfrak p}(1-(\Nm\mathfrak p)^{-s})^{-1},
\]
one obtains the exact Dirichlet identity
\begin{equation}\label{eq:function-field-dirichlet}
  \sum_{n\ge1}\frac{b_A(n)}{n^s}
  =\frac{\zeta(s)}{\log q}
  \left(-\frac{\zeta_A'}{\zeta_A}(s)\right).
\end{equation}
Unlike the number-field case, $\zeta_A(s)$ is a rational function of $q^{-s}$; its pole at $s=1$ is repeated periodically at
\[
  s=1+\frac{2\pi i k}{\log q},\qquad k\in\bbZ.
\]
These vertical translates are the spectral source of the log-periodic terms visible in the exact Carlitz formula \eqref{eq:carlitz-digital-stirling}.  Thus the natural function-field Stirling theorem is not merely
$n\log_q n+B n+o(n)$: it generally contains a bounded or linear-size periodic function of $\{\log_q n\}$.

The completed metric is measured by divisor degree or an $\infty$-adic norm, and the infinite-place determinant should recover the known Carlitz--Goss--Thakur Gamma functions after a sign normalization and choice of uniformizer.  Thakur's reflection and multiplication formulas \cite{Thakur1991} provide concrete models for the duality axiom: the classical sine determinant is replaced by a period and a finite product determined by the chosen place at infinity.  For higher-genus function fields, the initial Riemann--Roch irregularity contributes an additional finite boundary determinant, exactly as boundary states occur in the finite carry examples.

\section{Why the multivariable theory is separate}

Bhargava's multivariable factorials replace products of scalar differences by determinants of multivariate evaluation matrices.  The local state space is then governed by monomial filtrations, generalized Vandermonde complexes, Newton polytopes, and the geometry of the Zariski closure of $S\subset R^d$.  The natural infinite-place completion is expected to involve Barnes multiple Gamma functions or higher spectral determinants.  Since this changes the indexing geometry rather than merely the base field, it is deferred to a separate paper.

\section{Proof roadmap}

The present framework becomes a complete theorem once the following tasks are settled.

\begin{enumerate}[label=(P\arabic*),leftmargin=2.7em]
  \item Prove the local renewal slope and centered asymptotic for the inert cube primes uniformly in $p$.
  \item Construct the intrinsic inert-prime relative determinant from the quantile-renewal operator, including its boundary maps.
  \item Establish the trace--arithmetic identity for the determinant and the local correction cocycle.
  \item Prove normal convergence of the centered global product and the identity \eqref{eq:beta-L-form}.
  \item Prove a uniqueness theorem in a natural determinant-growth class.
  \item Determine whether the cube renewal complex has a self-duality yielding a canonical Pfaffian or square-root section.
  \item Verify determinant-admissibility for a nonmonomial cubic polynomial image from Takeda's families.
  \item Develop the metric completion uniformly over number fields and function fields.
\end{enumerate}

\section{Concluding perspective}

The usual Gamma function is the canonical completion of the ordinary factorial recurrence.  Bhargava factorials suggest that the correct generalization is not a single universal scalar special function.  It is a place-completed determinant object attached to local ordering dynamics.  The scalar function, when it exists, is a shadow of this line.

The proposed slogan is
\[
  \boxed{
  \text{Bhargava Gamma}
  =
  \text{determinant-line completion of a factorial calculus}.
  }
\]
For admissible sets, Stirling becomes spectral asymptotics and reflection becomes duality.  For nonadmissible sets, the failure itself records the absence of a coherent local-to-global determinant structure.

\end{document}